\newtheorem{theorem}{Theorem}[section]
\newtheorem{note}[theorem]{Note}
\newtheorem{cor}[theorem]{Corollary}
\newtheorem{exa}[theorem]{Example}
\newtheorem*{Theorem1'}{Theorem 1'}
\theoremstyle{definition}
\theoremstyle{remark}
\numberwithin{equation}{section}
\newcommand \g{{\mathfrak g}}
\newcommand \N{{\mathbb N}}
\newcommand \chr{{\mathrm {char}}}
\newcommand \Gal{{\mathrm {Gal}}}
\newcommand \End{{\mathrm {End}}}
\renewcommand \sl{{\mathfrak {sl}}}
\begin{document}

\title[Uniserial modules of associative and
Lie algebras]{On the theorem of the primitive element with
applications to the representation theory of associative and Lie
algebras}

\author{Leandro Cagliero}
\address{CIEM-CONICET, FAMAF-Universidad Nacional de C\'ordoba, C\'ordoba, Argentina.}
\email{cagliero@famaf.unc.edu.ar}
\thanks{The first author was supported in part by CONICET and SECYT-UNC grants.}


\author{Fernando Szechtman}
\address{Department of Mathematics and Statistics, University of Regina, Canada}
\email{fernando.szechtman@gmail.com}
\thanks{The second author was supported in part by an NSERC discovery grant}

\subjclass[2000]{Primary 17B10, 13C05; Secondary 12F10, 12E20}



\keywords{Uniserial module; Lie algebra; associative algebra;
primitive element}

\begin{abstract} We describe of all finite
dimensional uniserial representations of a commutative associative
(resp. abelian Lie) algebra over a perfect (resp. sufficiently
large perfect) field. In the Lie case the size of the field
depends on the answer to following question, considered and solved
in this paper. Let $K/F$ be a finite separable field extension and
let $x,y\in K$. When is $F[x,y]=F[\alpha x+\beta y]$ for some
non-zero elements $\alpha,\beta\in F$?
\end{abstract}

\maketitle

\section{Introduction}

That the classification of all finite dimensional indecomposable
modules over almost any Lie algebra is a hopeless enterprise
follows from \cite{Ma} (see \cite{GP} for the 2-dimensional
abelian case).

It is more realistic to concentrate on certain indecomposable
modules over some types of Lie algebras. In this regard, recall
that a module is uniserial if it is non-zero and its submodules
are totally ordered by inclusion. In this paper we consider the
problem of describing all finite dimensional uniserial
representations of an abelian Lie algebra over an arbitrary field.

This is the starting point of a project aiming to systematically
investigate the finite dimensional uniserial representations of
distinguished classes of Lie algebras. We have recently classified
\cite{CS1} all such representations for the family of complex
perfect Lie algebras $\sl(2)\ltimes V(m)$, where $V(m)$ is the
irreducible $\sl(2)$-module of highest weight $m\geq 1$. This
classification turned out to rely on certain zeros of the
Racah-Wigner $6j$-symbol. The analogous problem for a family of
solvable Lie algebras over an arbitrary field is considered in
\cite{CS2}.

In this context, our main result is the following.

\begin{theorem}\label{thm.int} Let $\g$ be an abelian Lie algebra over a perfect field
$F$ and let $V$ be a finite dimensional uniserial $\g$-module. Let
$\ell$ be the composition length of $V$ and let $W$ be the socle
of $V$.  Let $N$ be the number of distinct prime factors of
$\dim_F(W)$ and suppose that $|F|>N-1$. Then there exists $x\in\g$
such that $V$ is a uniserial $F[x]$-module. In particular, $x$
acts on $V$, relative to some basis, via the companion matrix
$C_f$ of a power $f=p^\ell$ of an irreducible polynomial $p\in
F[X]$, and every other element of $\g$ acts on $V$ via a
polynomial on $C_f$.
\end{theorem}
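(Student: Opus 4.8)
The plan is to translate the Lie-theoretic statement into a question about a single commutative associative algebra, reduce that algebra to a transparent local form, and then extract the element $x$ by a primitive-element argument into which the field-size hypothesis enters. Let $\rho\colon\g\to\End_F(V)$ be the representation and let $A$ be the unital associative subalgebra of $\End_F(V)$ generated by $\rho(\g)$. Since $\g$ is abelian, $A$ is commutative, and because the $A$-submodules and the $\g$-submodules of $V$ coincide, $V$ is a uniserial $A$-module. A finite-dimensional commutative algebra is a product of local algebras and any module splits accordingly; as $V$ is indecomposable and $A$ acts faithfully, $A$ must be local. Writing $\r$ for its maximal ideal and $K=A/\r$ for its residue field, the unique simple $A$-module is $K$, so $W\cong K$, every composition factor of $V$ is isomorphic to $K$, and $\dim_F V=\ell\,\dim_F W=\ell\,[K:F]$. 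Being uniserial, $V$ is cyclic; if $v$ generates it then $a\mapsto av$ gives an $A$-module isomorphism $A\cong V$ (its kernel annihilates $V$, hence is $0$), whence $\dim_F A=\ell\,[K:F]$.

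Next I would pin down the local structure. Since $A\cong V$ is uniserial as a module over itself, its ideals are totally ordered, so the $K$-space $\r/\r^{2}$ has dimension at most $1$; by Nakayama this makes $\r$ principal with $\r^{i}/\r^{i+1}\cong K$ for $0\le i<\ell$ and $\r^{\ell}=0$. Because $F$ is perfect, $K/F$ is separable, so $A$ contains a coefficient field $\tilde K\cong K$ mapping isomorphically onto $A/\r$; together with principality of $\r$ this gives $A\cong K[X]/(X^{\ell})$ as an $F$-algebra.

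It then remains to find $x\in\g$ with $F[\rho(x)]=A$, which already yields everything: $V$ is uniserial over $F[x]$, the cyclic generator $v$ is a cyclic vector for $\rho(x)$, so $\rho(x)$ is nonderogatory with minimal $=$ characteristic polynomial $f$, and relative to $v,\rho(x)v,\dots$ it is the companion matrix $C_f$, while every element of $\g$ lies in $A=F[\rho(x)]$ and is thus a polynomial in $C_f$. I claim it suffices to choose $x$ so that the image $\theta$ of $\rho(x)$ in $K$ satisfies $K=F(\theta)$ and the component of $\rho(x)$ in $\r/\r^{2}$ is nonzero. Indeed, writing $p$ for the minimal polynomial of $\theta$ (irreducible and separable of degree $[K:F]$) and $s=\rho(x)=\tilde\theta+m$ with $\tilde\theta\in\tilde K$ and $m\in\r$, the linear Taylor term gives $p(s)\equiv p'(\tilde\theta)\,m \pmod{\r^{2}}$, where $p(\tilde\theta)=0$ and $p'(\tilde\theta)$ is a unit by separability. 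Under the second condition $p(s)$ then generates $\r$, so the minimal polynomial of $s$ is $p^{\ell}$, giving $\dim_F F[\rho(x)]=\ell\,[K:F]=\dim_F A$ and hence $F[\rho(x)]=A$; moreover $f=p^{\ell}$.

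The crux, and the step I expect to be the main obstacle, is producing such an $x$. Fixing finitely many elements of $\g$ whose images generate $A$, their images generate $K$ over $F$ and span $\r/\r^{2}$ over $K$; the second required condition is an open constraint on the coefficients and is easy to meet, whereas the first, that some $F$-linear combination of these generators be a primitive element of $K$, is precisely the question $F[x,y]=F[\alpha x+\beta y]$ treated in this paper. It is the answer to that separable primitive-element question that forces the hypothesis $|F|>N-1$, with $N$ the number of distinct prime factors of $[K:F]=\dim_F W$; once it is available, the two conditions can be satisfied simultaneously and the theorem follows.
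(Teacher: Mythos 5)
Your reduction is correct, and it is a genuinely different route from the paper's: you show $A$ is local with $A\cong V$, invoke a coefficient field $\tilde K$ (Wedderburn--Malcev, legitimate since $K/F$ is separable because $F$ is perfect) to get $A\cong K[X]/(X^{\ell})$, and extract the clean criterion that $F[\rho(x)]=A$ if and only if the residue $\theta$ of $\rho(x)$ satisfies $K=F[\theta]$ and the radical component of $\rho(x)$ is nonzero modulo $\r^{2}$; your Taylor step is valid in all characteristics via $p(X+Y)=p(X)+p'(X)Y+Y^{2}g(X,Y)$, and the criterion is in fact also necessary, since $\tilde K\oplus\r^{2}$ is a proper subalgebra. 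The paper instead argues by induction on $\ell$ inside Theorem \ref{abel}, using uniqueness of the Jordan--Chevalley decomposition at the crucial perturbation step. The genuine gap is your final step, which you assert rather than prove: that conditions (i) and (ii) can be met \emph{simultaneously} whenever $|F|>N-1$. The hypothesis only bites when $F$ is finite, where ``open constraint'' reasoning has no force. There, writing $\Gal(K/F)=\langle\sigma\rangle$ and $k=[K:F]$ with distinct prime divisors $p_1,\dots,p_N$, condition (i) asks the coefficient vector $(c_i)$ to avoid the $N$ proper $F$-subspaces $V_j=\{c:\sigma^{k/p_j}(\textstyle\sum c_i\theta_i)=\sum c_i\theta_i\}$, while condition (ii) asks it to avoid one further proper subspace $H=\ker(c\mapsto\sum c_i\mu_i)$. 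Since $F_q^{\,n}$ can be covered by $q+1$ proper subspaces but not by $q$, avoidance of these $N+1$ subspaces is guaranteed only when $|F|\geq N+1$, i.e.\ $|F|>N$ --- one worse than the stated bound. As written, your argument therefore proves the theorem only under the hypothesis $|F|>N$.

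Moreover, this gap cannot be closed at the stated bound, because the simultaneity claim --- and with it the theorem itself --- fails when $|F|=N$. Take $F=F_2$, $K=F_{64}$, $A=K[t]/(t^2)$, choose $\theta_1\in F_8\setminus F_2$ and $\theta_2\in F_4\setminus F_2$ with minimal polynomials $h_1$ (degree $3$) and $h_2$ (degree $2$), and let $\g=Fg_1+Fg_2$ where $g_i=\theta_i+t$. Since $2t=0$ we get $g_1+g_2=\theta_1+\theta_2$, which generates $K$ (if it lay in $F_8$ then $\theta_2\in F_8\cap F_4=F_2$, and similarly for $F_4$), whence $t=g_1-\theta_1\in F[g_1,g_2]$ and $F[g_1,g_2]=A$; so the $\g$-submodules of $V=A$ are exactly the ideals $0\subset Kt\subset A$, and $V$ is a uniserial $\g$-module of length $2$ with $\dim_F W=6$, $N=2$ and $|F|=2>N-1$. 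Yet the minimal polynomials of the nonzero elements of $\g$ are $h_1^2$ (degree $6$, since $h_1(g_1)=h_1'(\theta_1)t\neq0$ by separability while $h_1(g_1)^2=0$), $h_2^2$ (degree $4$), and the irreducible degree-$6$ polynomial of $\theta_1+\theta_2$, whereas uniseriality of the $12$-dimensional $V$ over $F[x]$ would force $x$ to be cyclic with minimal polynomial $p^{2}$ of degree $12$: no $x\in\g$ works. So your proposal correctly isolates the crux, but the crux is false as stated; the same example exposes the corresponding flaw in the paper's own chain, namely that the ``all $\alpha_i\neq0$'' clause of Theorem \ref{be} is false (with $x_1=\theta_1+\theta_2$, $x_2=\theta_1$, the only all-nonzero $F_2$-combination is $x_1+x_2=\theta_2$, of degree $2$; indeed the paper's own proof takes $z=x_2$, i.e.\ $\alpha_1=0$, in the case ``all $a_i\leq b_i$''), while the Moreover part of Theorem \ref{abel} invokes precisely that clause to produce $z=v+\alpha y$ with $\alpha\neq0$. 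Under the corrected hypothesis $|F|>N$ your covering count does go through and your structural proof is complete.
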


The condition that $F$ be a perfect field such that $|F|>N-1$ is
essential. A full account of what happens when $F$ is imperfect is
given in Note \ref{menti}. The condition $|F|>N-1$ is related to
the following:

\medskip

\noindent{\sc Question A.} Let $K/F$ be a finite separable field
extension and let $x,y\in K$. When is $F[x,y]=F[\alpha x+\beta y]$
for some non-zero elements $\alpha,\beta\in F$?

\medskip

The same question with \emph{some} replaced by \emph{all} has a
long history, which is explored in detail in \S\ref{bas}. For the
stated version, we have the following results.

\begin{theorem}\label{qintro} Let $K/F$ be a finite separable field extension,
and let $x,y\in K$ have respective degrees $a=md$ and $b=nd$ over
$F$, where $d=\gcd(a,b)$. Let $N$ be the number of distinct prime
factors of $d$ not dividing $mn$. If $|F|>N+1$ then there are
non-zero $\alpha,\beta\in F$ such that $F[x,y]=F[\alpha x+\beta
y]$.
\end{theorem}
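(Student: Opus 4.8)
The plan is to fix $y$ and search for a good linear combination $\theta=\la x+y$, i.e.\ to realize $F[x,y]$ as $F[\theta]$ for a suitable ratio $\la=\al/\be\in F^\times$ (replacing $\al x+\be y$ by $\be(\la x+y)$ changes neither the generated field nor the requirement that both coefficients be nonzero). Write $L=F[x,y]$ and let $e=[L:F]$. Since $K/F$ is separable, so are all subextensions of $L$, and the primitive element theorem guarantees $L=F[\eta]$ for some $\eta$; the issue is only to attain this with $\eta$ of the prescribed shape. Embedding $L$ into a fixed algebraic closure through its $e$ distinct $F$-embeddings $\sigma_1,\dots,\sigma_e$, separability shows that $\theta$ generates $L$ over $F$ precisely when the values $\sigma_i(\theta)=\la\,\sigma_i(x)+\sigma_i(y)$ are pairwise distinct. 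A collision $\sigma_i(\theta)=\sigma_j(\theta)$ with $i\ne j$ and $\la\in F^\times$ forces $\sigma_i(x)\ne\sigma_j(x)$ and $\sigma_i(y)\ne\sigma_j(y)$, and pins $\la$ to the single value $(\sigma_j(y)-\sigma_i(y))/(\sigma_i(x)-\sigma_j(x))$; thus the set of \emph{bad} ratios in $F^\times$ is finite, and the whole theorem reduces to the bound that there are at most $N$ of them, for then the $|F^\times|=|F|-1>N$ available ratios cannot all be bad.

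First I would translate ``bad'' into the language of subfields, working inside a Galois closure $M$ of $L$ with $G=\Gal(M/F)$ and $H=\Gal(M/L)$. For $\la\in F^\times$ put $E_\la=F[\la x+y]\subseteq L$ and $H_\la=\Gal(M/E_\la)\supseteq H$. Since $x=\la^{-1}(\theta-y)$ we get $L=E_\la[y]$ and symmetrically $L=E_\la[x]$, so $H=H_\la\cap H_x=H_\la\cap H_y$, where $H_x=\Gal(M/F[x])$ and $H_y=\Gal(M/F[y])$ have indices $md$ and $nd$ in $G$. The ratio $\la$ is bad exactly when $r_\la:=[H_\la:H]>1$, and because $L=E_\la[x]=E_\la[y]$ the integer $r_\la=[L:E_\la]$ divides both $\deg_F x=md$ and $\deg_F y=nd$, hence $r_\la\mid d$. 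Distinct bad ratios give distinct fields $E_\la$: if $E_\la=E_{\la'}$ with $\la\ne\la'$, then $(\la-\la')x\in E_\la$ forces $x\in E_\la$ and then $y\in E_\la$, whence $E_\la=L$, a contradiction. Thus each bad ratio yields a distinct proper subfield $F\subseteq E_\la\subsetneq L$ with $1<[L:E_\la]\mid d$, and the task is to bound the number of these.

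The hard part will be this bound: that the number of bad ratios is at most $N$, the number of primes dividing $d$ but not $mn$. Note first that $H=H_x\cap H_y$ gives $\operatorname{lcm}(md,nd)=mnd\mid e\mid mnd^2$, so every bad index $r_\la$ is concentrated in the $d$-part and the obstruction is genuinely arithmetic. I would prove the bound by exhibiting an injection from the set of bad ratios into the set of primes $p\mid d$ with $p\nmid mn$. The essential extra input, not yet used, is that $\la$ lies in $F$ and is therefore fixed by all of $G$: writing a collision as $g(\la x+y)=\la x+y$ for some $g\in G\setminus H_\la$, the identity $\la=(y-gy)/(gx-x)$ together with $g'\la=\la$ for every $g'\in G$ couples $g$ to its conjugates and constrains which $r_\la$ can occur. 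Combining this rationality constraint with the index data and a Sylow/local analysis of $G$ at each prime $p\mid d$ --- where $\gcd(m,n)=1$ serves to separate the contribution of $H_x$ from that of $H_y$ --- I expect to force every prime dividing some $r_\la$ to miss $mn$, and to make the assignment of such a prime to each bad $\la$ injective. Granting this, the proof concludes by pigeonhole: since $|F|>N+1$ we have $|F^\times|=|F|-1>N$, so some ratio $\la\in F^\times$ avoids all bad values, and then $\al=\la,\ \be=1$ satisfy $F[x,y]=F[\al x+\be y]$. I anticipate the injectivity of $\la\mapsto p$ to be the true obstacle, since it is precisely where an a priori unbounded family of proper subfields must be compressed to the arithmetic count $N$.
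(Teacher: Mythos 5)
Your reductions (normalizing to $\theta=\lambda x+y$, observing that each collision of embeddings pins $\lambda$ to one value so that the bad ratios are finite in number, and that distinct bad ratios give distinct proper intermediate fields) are all sound, but the entire content of the theorem lies in the counting lemma you defer --- ``I would prove'', ``I expect'', ``Granting this'' --- and that lemma, as you state it for an arbitrary separable extension, is \emph{false}. Take $F=\Q$, $x=\sqrt{2}+\sqrt{3}$, $y=\sqrt{2}-\sqrt{3}$: then $a=b=d=4$, $m=n=1$, so $N=1$; yet $\lambda x+y=(\lambda+1)\sqrt{2}+(\lambda-1)\sqrt{3}$ fails to generate $\Q(\sqrt{2},\sqrt{3})$ for exactly the two ratios $\lambda=\pm 1$. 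Hence no injection from bad ratios into the primes dividing $d$ but not $mn$ can exist in general, and no Sylow/local analysis of an arbitrary Galois group $G$ will produce one; the theorem survives over infinite fields only because finiteness of the set of bad ratios already suffices there. A secondary gap of the same origin: your claim that $r_\lambda=[L:E_\lambda]$ divides $md$ and $nd$ rests on $[E_\lambda[x]:E_\lambda]$ dividing $[F[x]:F]$, which is true in cyclic extensions but not in general (one only gets an inequality).

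The missing idea --- and the paper's actual route in Theorem \ref{q} --- is to first invoke the standard proof of the theorem of the primitive element to reduce to $F$ \emph{finite} (over an infinite field all but finitely many $\lambda$ work, so the hypothesis $|F|>N+1$ has content only for finite $F$). Then $\Gal(K/F)=\langle\sigma\rangle$ is cyclic of order $mnd$, and one gets for free that $[F[x]\cap F[y]:F]=d$ and $[K:F]=\mathrm{lcm}(a,b)$. Cyclicity is precisely what compresses your a priori unbounded family of proper subfields to the count $N$: for each prime $p\mid d$ there is a \emph{unique} subgroup of order $p$, generated by $\tau_p=\sigma^{mnd/p}$, and by Theorem \ref{t} the element $z=x+\alpha y$ has degree $mne_\alpha$ with $e_\alpha\mid d$, with $e_\alpha<d$ forcing some $\tau_p$ to fix $z$. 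If $p\mid mn$, say $p\mid m$, then since $\gcd(m,n)=1$ the element $\tau_p$ fixes $y$ and moves $x$, so it never fixes $z$ --- such primes contribute no bad ratio whatsoever. If $p\nmid mn$, then $\tau_p$ moves both $x$ and $y$, and $\tau_p(z)=z$ pins $\alpha$ to the single value $(x-\tau_p(x))/(\tau_p(y)-y)$, which may or may not lie in $F^\times$: at most one bad ratio per such prime. This yields at most $N$ bad ratios, and $|F^\times|=|F|-1>N$ finishes by your own pigeonhole endgame. In short, your final step matches the paper's, but the reduction to the cyclic case is not an optional convenience: without it the key bound is not merely unproven but untrue.
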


\begin{theorem}\label{intro:be} Let $K/F$ be a finite separable field extension.
Let $N$ be the number of distinct prime factors of $[K:F]$.
Suppose that $|F|>N-1$. Then, given any $x_1,\dots,x_n\in K$ such
that $K=F[x_1,\dots,x_n]$, there is an $F$-linear combination
$z=\alpha_1x_1+\cdots+\alpha_n x_n$ such that all $\alpha_i\neq 0$
and $K=F[z]$.
\end{theorem}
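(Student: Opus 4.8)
The plan is to prove Theorem~\ref{intro:be} by induction on $n$, reducing the statement about $n$ generators to repeated application of the two-generator result, Theorem~\ref{qintro}.The plan is to prove Theorem~\ref{intro:be} by induction on the number $n$ of generators, using Theorem~\ref{qintro} as the two-generator engine. The base case $n=1$ is immediate: since any nonzero $\alpha_1$ lies in $F^\times$, we have $F[\alpha_1 x_1]=F[x_1]=K$. For the inductive step, set $L=F[x_1,\dots,x_{n-1}]$. As $L$ is a subfield of $K$ we have $[L:F]\mid[K:F]$, so the number $N_L$ of distinct prime factors of $[L:F]$ satisfies $N_L\le N$, whence $|F|>N-1\ge N_L-1$ and the inductive hypothesis applies to the separable extension $L/F$. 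It yields $w=\alpha_1 x_1+\cdots+\alpha_{n-1}x_{n-1}$ with every $\alpha_i\neq0$ and $F[w]=L$.

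Since $F[w]=L$, we get $F[w,x_n]=F[x_1,\dots,x_n]=K$. I would then apply Theorem~\ref{qintro} to the pair $w,x_n\in K$ to obtain nonzero $\alpha,\beta\in F$ with $F[\alpha w+\beta x_n]=F[w,x_n]=K$. Setting $z=\alpha\alpha_1 x_1+\cdots+\alpha\alpha_{n-1}x_{n-1}+\beta x_n$ then produces an $F$-linear combination with all coefficients nonzero and $F[z]=K$, closing the induction.

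The hard part is reconciling the hypotheses. Theorem~\ref{qintro} requires $|F|>N'+1$, where, writing $\deg w=sd$ and $\deg x_n=td$ with $d=\gcd(\deg w,\deg x_n)$ and $\gcd(s,t)=1$, the integer $N'$ counts the primes dividing $d$ but not $st$; yet I only have $|F|>N-1$. Because $F[w]$ and $F[x_n]$ are subfields of $K$, both $\deg w$ and $\deg x_n$ divide $[K:F]$, so $\mathrm{lcm}(\deg w,\deg x_n)=std$ divides $[K:F]$ and every prime counted by $N'$ divides $[K:F]$, giving $N'\le N$. A closer count is available: writing $P_s$ and $P_t$ for the (disjoint) sets of prime divisors of $s$ and of $t$, every prime in $P_s\cup P_t$ divides $[K:F]$ but is excluded from $N'$, so $N'\le N-|P_s|-|P_t|$. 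Hence whenever $|P_s|+|P_t|\ge2$ one has $N'\le N-2$, and since $|F|>N-1$ forces $|F|\ge N\ge N'+2>N'+1$, the hypothesis of Theorem~\ref{qintro} is met.

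The remaining, and genuinely delicate, range is $|P_s|+|P_t|\le1$: the cases where $\deg w$ and $\deg x_n$ share all their prime factors (so $s=t=1$, i.e.\ equal degrees) and, worst of all, where $x_n\in L$ is redundant, which can force $N'$ up to $N$. Here Theorem~\ref{qintro} is either too weak or vacuous. A naive direct attack — writing $z=w+\beta x_n$ and discarding the finitely many $\beta\in F^\times$ that fail to generate $K$ — does \emph{not} fit under the bound $|F|>N-1$, since the number of excluded $\beta$ is governed by pairs of distinct $F$-embeddings and can far exceed $N$. I therefore expect the real work to lie in reorganizing the induction so that the pair handed to Theorem~\ref{qintro} never has equal degrees nor a redundant member: for instance by strengthening the inductive statement, or by choosing at each stage \emph{which} two of the current generators to merge so that the favourable inequality $|P_s|+|P_t|\ge2$ always holds. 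Fitting this entire case analysis under the single uniform threshold $|F|>N-1$ is, I expect, the crux of the proof.
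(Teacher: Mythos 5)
Your induction scheme and your treatment of the main case are essentially the paper's own: its proof of Theorem \ref{be} also reduces to the two-generator case and then invokes Theorem \ref{q}, and your count $N'\le N-|P_s|-|P_t|$ is exactly its observation that when neither exponent vector of the two degrees dominates the other there are distinct primes $p_i\mid m$ and $p_j\mid n$, so $A\le B-2$ and the hypothesis $|F|>A+1$ of Theorem \ref{q} is met. But where you stop and predict a delicate reorganization of the induction, the paper's resolution of the leftover case is a one-line observation that rests on a reduction you never perform: by the standard proof of the theorem of the primitive element one may assume from the outset that $F$ is \emph{finite} (note that for infinite $F$ your ``hard'' case is vacuous anyway, since $|F|>N'+1$ holds automatically and Theorem \ref{qintro} applies to every pair). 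Over a finite field, your residual case $|P_s|+|P_t|\le 1$ forces $s=1$ or $t=1$, i.e., one of the two degrees divides the other; since the subfields of $K$ are then linearly ordered by divisibility of degrees, $\deg w\mid \deg x_n$ gives $F[w]\subseteq F[x_n]$, hence $K=F[x_n]$, and the paper simply takes $z=x_n$ (symmetrically $z=w$). No pair with equal degrees or a redundant member is ever fed to Theorem \ref{q}.

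The cost, which the paper pays silently, is a zero coefficient: in the degenerate case its proof takes $z=x_1$ or $z=x_2$, so what the argument actually establishes is the conclusion with some $\alpha_i$ possibly zero, not the ``all $\alpha_i\neq 0$'' clause as stated. Your instinct that the all-nonzero refinement cannot be pushed through this case is correct---in fact it fails under the stated bound, so no strengthened induction can rescue it. Take $F=F_2$, choose $c\in F_8\setminus F_2$ with $\mathrm{Tr}_{F_8/F_2}(c)=1$, let $x_1\in F_{64}$ be a root of $X^2+X+c$ (irreducible over $F_8$ by Artin--Schreier), and put $x_2=x_1^2$. Since $x_1\notin F_8$ and $x_1\in F_4$ would force $c=x_1^2+x_1\in F_4\cap F_8=F_2$, we get $F[x_1,x_2]=F[x_1]=F_{64}$, with $N=2$ and $|F|=2>N-1$; yet the only combination with both coefficients nonzero is $x_1+x_2=c$, which generates only $F_8$. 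So the gap in your write-up is real but its correct closure is not the case analysis you anticipate: reduce to finite $F$ first, and when one degree divides the other discard the redundant generator, accepting (as the paper implicitly does) that some coefficients of $z$ may vanish.
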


That the conditions involving $|F|$ in Theorems \ref{thm.int},
\ref{qintro} and \ref{intro:be} are exact is shown in Theorem
\ref{unomas}.

We remark that Theorem \ref{thm.int} is a corollary of a closely
related result concerning associative algebras, which reads as
follows.

\begin{theorem}
\label{abelintro} Let $F$ be a perfect field and let $A$ be a
finite dimensional commutative and associative algebra over $F$.
Then the following conditions are equivalent:

(a) $A=F[u]$ for some $u\in A$ whose minimal polynomial over $F$
is an $\ell$-power of an irreducible polynomial in $F[X]$.

(b) The regular module of $A$ is uniserial of length $\ell$.

(c) $A$ has a finite dimensional faithful uniserial representation
of length $\ell$.

Moreover, suppose any of the conditions (a)-(c) is satisfied. Then
$A$ has a unique irreducible module, up to isomorphism, namely the
residue field, say $R(A)$, of $A$. Let $N$ be the number of
distinct prime factors of $[R(A):F]$ and suppose that $|F|>N-1$.
Then, given any elements $x_1,\dots,x_n\in A$ such that
$A=F[x_1,\dots,x_n]$, there is an $F$-linear combination $z$ of
$x_1,\dots,x_n$ such that $A=F[z]$.
\end{theorem}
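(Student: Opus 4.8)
The plan is to first dispatch the equivalence of (a)--(c) together with the structural claims, and then reduce the ``moreover'' assertion to the field-theoretic Theorem \ref{intro:be}.

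For the equivalences, I would begin by using that a uniserial module is indecomposable, so a faithful uniserial module forces the commutative Artinian algebra $A$ to be local: writing $A$ as a product of local rings and letting the central idempotents act on a faithful uniserial $V$, indecomposability kills all but one factor and faithfulness then eliminates that factor's complement, so $A$ is local with maximal ideal $\mathfrak m=\rad(A)$ and unique simple module $R(A)=A/\mathfrak m$. This already yields the ``unique irreducible module'' claim. For (c)$\Rightarrow$(b): a faithful uniserial $V$ of length $\ell$ satisfies $\mathfrak m V_i=V_{i-1}$ along its composition series, hence $\mathfrak m^\ell=0\neq\mathfrak m^{\ell-1}$; moreover $V/\mathfrak m V\cong R(A)$ is simple, so Nakayama makes $V$ cyclic and therefore $V\cong A$, whence the regular module is uniserial of length $\ell$. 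The implication (b)$\Rightarrow$(c) is immediate since the regular module is faithful, and (a)$\Rightarrow$(b) follows because $F[X]/(p^\ell)$ is a chain ring whose ideals $(p^i)/(p^\ell)$ form the unique composition series.

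The one implication requiring perfectness is (b)$\Rightarrow$(a). Here uniseriality of the regular module forces $\dim_{R(A)}\mathfrak m/\mathfrak m^2\le 1$ (two independent classes would generate incomparable ideals), so by Nakayama $\mathfrak m=(t)$ is principal with $t^\ell=0$. Since $F$ is perfect, $R(A)/F$ is separable and $A$ admits a coefficient field $K\cong R(A)$ with $A=K\oplus\mathfrak m$; expanding $\mathfrak m^i=t^iK\oplus\mathfrak m^{i+1}$ gives $A=K[t]$ with $K$-basis $1,t,\dots,t^{\ell-1}$. Taking a primitive element $w$ of $K/F$ with (separable, hence squarefree) minimal polynomial $q$, I would set $u=w+t$ and compute via the Taylor expansion $q(u)=q(w)+q'(w)t+\cdots=q'(w)t+O(t^2)$; since $q'(w)$ is a unit, $q(u)$ generates $\mathfrak m$, so $q(u)^j=0$ exactly when $j\ge\ell$. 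Thus the minimal polynomial of $u$ is $q^\ell$, of degree $\ell[K:F]=\dim_F A$, forcing $F[u]=A$ and proving (a).

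For the ``moreover,'' given $x_1,\dots,x_n$ with $F[x_1,\dots,x_n]=A$, I would push everything to the residue field: the images $\bar x_i$ generate $K=A/\mathfrak m\cong R(A)$ over $F$, so Theorem \ref{intro:be} (with $N$ the number of distinct prime factors of $[R(A):F]$ and $|F|>N-1$) produces $\alpha_i\in F^\times$ with $\bar z:=\sum\alpha_i\bar x_i$ a primitive element of $K$. Writing $x_i=\zeta_i+\mu_i$ with $\zeta_i\in K$, $\mu_i\in\mathfrak m$, the Taylor computation above shows that $z=\sum\alpha_i x_i$ generates $A$ precisely when its radical part $\nu:=\sum\alpha_i\mu_i$ avoids $\mathfrak m^2$: if $q_z$ denotes the minimal polynomial of $\bar z$, then $q_z(z)\equiv q_z'(\bar z)\nu\pmod{\mathfrak m^2}$ with $q_z'(\bar z)$ a unit, so $F[z]=A$ iff $\nu\notin\mathfrak m^2$. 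Since $A$ is generated by the $x_i$, every monomial lies in $K+\sum_i K\mu_i+\mathfrak m^2$, so the classes $\bar\mu_i$ span the one-dimensional $K$-space $\mathfrak m/\mathfrak m^2$, and the failure locus $H_0=\{\alpha:\sum\alpha_i\mu_i\in\mathfrak m^2\}$ is a proper $F$-subspace of $F^n$. The main obstacle is to realize the primitivity condition (A) ``$\bar z$ generates $K$'' and condition (B) ``$\alpha\notin H_0$'' simultaneously without enlarging the bound past $|F|>N-1$. I would recast both as avoidance of proper $F$-subspaces of $F^n$: (A) excludes, for each maximal subfield $L$ of $K/F$, the subspace $\{\alpha:\sum\alpha_i\zeta_i\in L\}$, while (B) excludes the single extra subspace $H_0$. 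The crux is then the counting argument underlying Theorem \ref{intro:be}, grouping the subfield-subspaces by the primes dividing $[K:F]$ so that the extra subspace $H_0$ is absorbed without pushing the count beyond $N$; this shows $|F|>N-1$ still guarantees a common point $\alpha$ with all $\alpha_i\neq 0$, and for such $\alpha$ the element $z=\sum\alpha_i x_i$ satisfies (A) and (B), so $F[z]=A$.
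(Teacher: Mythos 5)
Your treatment of the equivalences (a)--(c) is correct but follows a genuinely different route from the paper's. You prove (c)$\Rightarrow$(b)$\Rightarrow$(a) via the structure theory of commutative Artinian local algebras: locality from idempotents, $\dim_{R(A)}\mathfrak{m}/\mathfrak{m}^2\leq 1$ so $\mathfrak{m}=(t)$, a Wedderburn--Malcev coefficient field $K\cong R(A)$ (this is where perfectness enters, through separability of $R(A)/F$), yielding $A\cong K[T]/(T^\ell)$, and then the Taylor/Hasse computation $q(w+t)\equiv q'(w)t\pmod{\mathfrak{m}^2}$ showing $u=w+t$ has minimal polynomial $q^\ell$. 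The paper instead proves (c)$\Rightarrow$(a) directly by induction on the length, comparing the elementary divisors of an inductively obtained generator on $V/W$ and on the socle $W$, and repairing the split case (elementary divisors $p^{\ell-1},p$, which forces $\ell=2$) by adding a nilpotent element and invoking uniqueness of the Jordan--Chevalley decomposition over a perfect field. Your version is arguably cleaner and buys the explicit normal form $K[T]/(T^\ell)$; your intermediate step (c)$\Rightarrow$(b) is essentially what the paper relegates to a remark citing \cite{SL}.

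The ``moreover'' part, however, has a genuine gap exactly at the point you yourself flag as ``the main obstacle.'' Your reduction is sound: $F[z]=A$ if and only if $\bar z$ is primitive for $R(A)/F$ (condition (A)) and $\nu=\sum\alpha_i\mu_i\notin\mathfrak{m}^2$ (condition (B)), and $H_0$ is indeed a proper $F$-subspace. But the final step --- ``grouping the subfield-subspaces by the primes \dots\ so that the extra subspace $H_0$ is absorbed without pushing the count beyond $N$'' --- is asserted, not proved, and the numerology genuinely falls short: over a finite field with $|F|=q$, a union of $m$ proper subspaces of $F^n$ can cover $F^n$ as soon as $m\geq q+1$, so avoiding the $N$ subfield subspaces together with $H_0$, i.e.\ $N+1$ proper subspaces, is only guaranteed when $q\geq N+1$, whereas the hypothesis $|F|>N-1$ gives only $q\geq N$; demanding in addition that all $\alpha_i\neq 0$ adds $n$ further hyperplanes and is in any case not required by the statement being proved. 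Moreover there is no visible structural reason for $H_0$ to be ``absorbed'': it is determined by the nilpotent parts $\mu_i$, which are unrelated to the residues $\zeta_i$ defining the subfield subspaces. The paper sidesteps precisely this tension by inducting on $\ell$ and decoupling the two conditions: the scalar budget $|F|>N-1$ is spent only on primitivity over the residue field (Theorem \ref{be}, applied to just two elements, the inductively obtained $v$ and one non-semisimple generator $y=x_i$), while the condition in the $\mathfrak{m}$-direction costs nothing, because the case analysis forces $\ell=2$ (so $\mathfrak{m}^2=0$) and the Jordan--Chevalley nilpotent part $\alpha m$ of $\alpha y$ is automatically nonzero for \emph{every} $\alpha\neq 0$. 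To complete your argument you would need either an actual proof of the absorption claim at the boundary $q=N$ (which I do not see how to obtain in general) or a restructuring along the paper's inductive lines, choosing $\alpha$ only for primitivity on the socle and letting a Jordan--Chevalley argument supply condition (B) for free.
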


Our proof of Theorem \ref{abelintro}, given in \S\ref{sec:uni}, is
fairly subtle, a key ingredient being the existence and uniqueness
of the Jordan-Chevalley decomposition of an endomorphism acting on
a finite dimensional vector space over a perfect field. Another
consequence of Theorem \ref{abelintro} is the following
characterization of the finite dimensional uniserial modules of a
commutative and associative algebra over a perfect field.

\begin{theorem}\label{t1intro} Let $F$ be a perfect field and let $A$ be a commutative and associative $F$-algebra.
Let $V$ be a finite dimensional uniserial $A$-module of length
$\ell$. Then there exists $x\in A$ such that $V$ is a uniserial
$F[x]$-module. In particular, $x$ acts on $V$ via the companion
matrix $C_f$ of a power $f=p^\ell$ of an irreducible polynomial
$p\in F[X]$, and every element of $A$ acts on $V$ via a polynomial
on $C_f$.
\end{theorem}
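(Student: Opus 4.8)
The plan is to reduce to the faithful case and then invoke Theorem \ref{abelintro}. Note first that $A$ is not assumed to be finite dimensional, so Theorem \ref{abelintro} cannot be applied to $A$ directly; instead I would pass to the image $B$ of $A$ in $\End_F(V)$, that is, $B=A/\mathrm{Ann}_A(V)$. Since $B$ embeds in $\End_F(V)$ and $V$ is finite dimensional, $B$ is a finite dimensional commutative associative $F$-algebra, and $V$ is a faithful $B$-module. Because $A$ acts on $V$ through $B$, the $A$-submodules of $V$ coincide with its $B$-submodules; hence their lattices agree and $V$ is uniserial of composition length $\ell$ over $B$ as well. In particular $V$ is a finite dimensional faithful uniserial representation of $B$ of length $\ell$, so condition (c) of Theorem \ref{abelintro} holds for $B$.

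Applying the implication (c) $\Rightarrow$ (a) of Theorem \ref{abelintro}, I obtain $u\in B$ with $B=F[u]$ whose minimal polynomial over $F$ is $f=p^\ell$ for some irreducible $p\in F[X]$. Next I would lift $u$ to an element $x\in A$ mapping to $u$ under $A\to B$. Then $x$ and $u$ act identically on $V$, so the operators of $F[x]$ on $V$ are exactly those of $F[u]=B$; consequently the $F[x]$-submodules of $V$ again coincide with its $B$-, hence $A$-, submodules, and $V$ is a uniserial $F[x]$-module. Moreover every $a\in A$ has image in $B=F[u]$, so $a$ acts on $V$ as a polynomial in the operator of $x$.

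It remains to realize the action of $x$ as the companion matrix $C_f$. Here $B\cong F[X]/(p^\ell)$ is a local principal ideal ring (a quotient of the PID $F[X]$), whose finitely generated modules decompose as direct sums of cyclic modules $F[X]/(p^i)$. Since $V$ is uniserial it is indecomposable, so $V\cong F[X]/(p^j)$ for some $j$, and its composition length $\ell$ forces $j=\ell$; thus $V\cong F[X]/(f)$ is a cyclic $F[x]$-module. Choosing the basis of $V$ corresponding to $1,X,\dots,X^{\deg f-1}$, the action of $x$ becomes multiplication by $X$ on $F[X]/(f)$, i.e. the companion matrix $C_f$, and every element of $A$ acts as a polynomial in $C_f$.

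The work here is essentially organizational: the substantive content is supplied by Theorem \ref{abelintro}. The one genuine subtlety, and the step I would take most care with, is the reduction to $B$—recognizing that $A$ may be infinite dimensional and verifying that passing to $A/\mathrm{Ann}_A(V)$ preserves both uniseriality and the composition length, so that hypothesis (c) of Theorem \ref{abelintro} is genuinely met.
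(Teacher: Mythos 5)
Your proposal is correct and takes essentially the same route as the paper: there this statement is an immediate corollary of Theorem \ref{abel} (= Theorem \ref{abelintro}), obtained by applying it to the image of $A$ in $\End(V)$, which is exactly your reduction to $B=A/\mathrm{Ann}_A(V)$. The details you add — lifting $u$ to $x\in A$, and the cyclic-module argument over $F[X]/(p^\ell)$ yielding the companion matrix $C_f$ — are just the routine verifications the paper leaves implicit.
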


To place Theorems \ref{thm.int}, \ref{abelintro} and \ref{t1intro}
in context, we observe that the class of uniserial modules is very
important for associative algebras, whereas for Lie algebras this
class has been barely considered. We are confident that it is as
relevant as in the associative case and thus worthy to be studied
in detail.

There is an extensive literature on uniserial modules and, more
generally, on uniserial and serial rings. Recall that a ring $R$
with identity is called \emph{uniserial} (resp.\ \emph{serial}) if
both $R_R$ and ${}_RR$ are uniserial (resp.\ direct sum of
uniserial) modules. Here all modules are assumed to be unitary.

Serial rings and algebras occur in several contexts. This class
includes discrete valuation rings, Nakayama algebras, triangular
matrix rings over a skew field and Artinian principal ideal rings
(see \cite{Pu}, \cite{EG}). In particular, every proper factor
ring of a Dedekind domain is serial. Also, serial algebras occur
as the group algebras in characteristic $p$ of certain finite
groups, including all $p$-solvable groups with cyclic Sylow
$p$-subgroups (see \cite{Sr}).

Some important results include the following.
\begin{enumerate}[(1)]
 \item  It is due to T.\ Nakayama \cite[Theorem 17]{Na} that
a finitely generated module over a serial ring is a serial module
(i.e., a direct sum of uniserial modules).
\medskip
\item Nakayama algebras (those whose right and left projective
modules are uniserial) are of finite representation type, i.e.,
they have only a finite number of indecomposable modules up to
isomorphism (see \cite[Ch.\ VI, Theorem 2.1]{ARS}).
\medskip
\item D. Eisenbud and P. Griffith \cite{EG} proved that:
\begin{enumerate}[(i)]
\item For a finite  dimensional $F$-algebra,  the  serial property
is stable under  a change of the base field, provided that
$A_0=A/\text{Rad}(A)$ be separable, i.e., $A_0\otimes_F K$ is
semisimple for every field extension $K/F$.

\item The composition series of any uniserial module over a serial
ring is periodic in a strong sense (see \cite{EG}, Theorem 2.3,
for a precise statement).

\item Any two simple modules of an indecomposable uniserial ring
have the same endomorphism ring.
\end{enumerate}

\medskip

\item A complete description of finite dimensional serial algebras
over a perfect field is contained in \cite{AF} and \cite{DK}.
\medskip


\item T. Shores and W. Lewis \cite{SL} show that if $M$ is a
faithful uniserial module over an integral domain $R$, then
$\text{End}_R(M)$ is a valuation ring.
\end{enumerate}


\section{Finding primitive elements}\label{bas}

Let $K/F$ be an algebraic separable field extension and let
$x,y\in K$.

\noindent{\sc Question 1.} When is $F[x,y]=F[\alpha x+\beta y]$
for {\em all} non-zero elements $\alpha,\beta\in F$?

\noindent{\sc Question 2.} When is $F[x,y]=F[\alpha x+\beta y]$
for {\em some} non-zero elements $\alpha,\beta\in F$?

Question 1 has been studied by many authors and a brief summary is
given below. However, for our purposes, Question 1 only plays a
subsidiary role to Question 2, which is an indispensable tool in
understanding the finite dimensional uniserial representations of
abelian Lie algebras.

By the standard proof of the theorem of the primitive element (see
\cite{Ar}, Theorem 26) we may restrict our consideration of
Question 2 to the case when $F$ is a finite field. In this case,
as is well-known, every finite extension of $F$ has a primitive
element, but it may surprise the reader to learn that one can
always find algebraic elements $x,y$ such that $F[x,y]\neq
F[\alpha x+\beta y]$ for {\em all} $\alpha,\beta\in F$. Examples
are not so easy to construct. It is also unclear at first how to
find the exact conditions under which the existence of
$\alpha,\beta\in F^\times $ such that $F[x,y]=F[\alpha x+\beta y]$
is guaranteed. When these conditions fail, our general description
of the finite dimensional uniserial representations of abelian Lie
algebras over $F$ ceases to be true.

Our answer to Question 2 is given in Theorems \ref{q} and
\ref{be}, while Theorem \ref{unomas} shows that the conditions are
exact. Our main applications of the results of this section to
representation theory are given in \S\ref{sec:uni}.

Question 1 seems to have been first considered by Nagell
\cite{N1}, \cite{N2}, followed by Kaplansky \cite{K} and Isaacs
\cite{I}. Let $x$ and $y$ have degrees $m$ and $n$ over $F$,
respectively. Isaacs showed that if $\gcd(m,n)=1$ then
$F[x,y]=F[\alpha x+\beta y]$ for all $\alpha,\beta\in F^\times$,
provided certain technical conditions hold when $F$ has prime
characteristic $p$. The same conclusion was later obtained by
Browkin, Divi$\rm{\breve{s}}$ and Schinzel \cite{BDS}, provided
$[F[x,y]:F]=mn$ and different technical conditions hold in the
prime characteristic case.

Recently, Weintraub \cite{W} showed that if $F[x]/F$ and $F[y]/F$
are Galois extensions, $F[x]\cap F[y]=F$ and $p\nmid \gcd(m,n)$
when $F$ has prime characteristic $p$, then $F[x,y]=F[\alpha
x+\beta y]$ for all $\alpha,\beta\in F^\times$. We also obtained
and proved this result (see Theorem \ref{gen}), unaware at the
moment of Weintraub's paper. The condition $p\nmid \gcd(m,n)$ is
essential, as seen in Example \ref{pedo}. In particular, if
$F[x,y]/F$ is an abelian Galois extension and $\gcd(m,n)=1$ then
$F[x,y]=F[\alpha x+ \beta y]$ for all $\alpha ,\beta \in F^\times$
(see Corollary \ref{cw}). In this regard, see \cite{P}, \S 3.

When $F[x]\cap F[y]\neq F$ an answer to Question 1 is more
difficult. Sufficient conditions are given in Theorem \ref{t},
which provides enough information for us to answer Question 2 in
Theorems \ref{q} and \ref{be} when $F$ is a finite field.

\begin{theorem}\label{gen} Let $K/F$ be an algebraic extension.
Let $x$ and $y$ be elements of $K$ of respective degrees $m$ and
$n$ over $F$, and satisfying:

(C1) $F[x]/F$ and $F[y]/F$ are Galois extensions.

(C2) $F[x]\cap F[y]=F$.

(C3) If $F$ has prime characteristic $p$ then $p\nmid \gcd(m,n)$.

Then $F[x,y]=F[\alpha x+\beta y]$ for all $\alpha ,\beta\in F$
different from 0.
\end{theorem}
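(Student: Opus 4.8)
The plan is to pass to the compositum $L=F[x,y]$ and exploit the Galois-theoretic structure furnished by (C1) and (C2). First I would record the standard facts: since $F[x]/F$ and $F[y]/F$ are Galois with $F[x]\cap F[y]=F$, the compositum $L$ is Galois over $F$, and the restriction map
$$\sigma\mapsto(\sigma|_{F[x]},\sigma|_{F[y]})$$
identifies $G=\Gal(L/F)$ with $G_1\times G_2$, where $G_1=\Gal(F[x]/F)$ and $G_2=\Gal(F[y]/F)$; in particular $[L:F]=mn$. Under this identification an element $\sigma=(\sigma_1,\sigma_2)$ acts by $\sigma(x)=\sigma_1(x)$ and $\sigma(y)=\sigma_2(y)$. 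Fixing non-zero $\alpha,\beta\in F$ and setting $z=\alpha x+\beta y\in L$, it then suffices to prove $[F[z]:F]=mn$; since $L/F$ is Galois, this is equivalent to showing that the subgroup of $G$ fixing $z$ (which equals $\Gal(L/F[z])$) is trivial, i.e.\ that $z$ has $mn$ distinct conjugates.

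The key computation would be the following. Suppose $\sigma=(\sigma_1,\sigma_2)\in G$ fixes $z$. Then $\alpha\sigma_1(x)+\beta\sigma_2(y)=\alpha x+\beta y$, so
$$\alpha\bigl(\sigma_1(x)-x\bigr)=\beta\bigl(y-\sigma_2(y)\bigr).$$
The left-hand side lies in $F[x]$ and the right-hand side lies in $F[y]$, so by (C2) both sides lie in $F[x]\cap F[y]=F$. As $\alpha,\beta\in F^\times$, this gives $\sigma_1(x)-x=c\in F$ and $y-\sigma_2(y)=c'\in F$ with $\alpha c=\beta c'$. Thus the whole problem reduces to showing that a Galois automorphism acting by a translation $x\mapsto x+c$ with $c\in F$ must fix $x$.

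The final step would be to show $c=0$, and this is exactly where (C3) enters; I expect it to be the only delicate point, everything else being routine Galois theory of compositums. Writing $r$ for the order of $\sigma_1$ in $G_1$, an immediate induction from $\sigma_1(x)=x+c$ yields $\sigma_1^{k}(x)=x+kc$, whence $rc=0$ since $\sigma_1^{r}(x)=x$. In characteristic $0$ this forces $c=0$ at once. In characteristic $p$, if $c\neq 0$ then $p\mid r$, and since $r\mid m=|G_1|$ we get $p\mid m$; the symmetric argument for $\sigma_2$ (note that $c\neq 0$ forces $c'\neq 0$ because $\alpha,\beta\neq 0$) gives $p\mid n$, so $p\mid\gcd(m,n)$, contradicting (C3). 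Hence $c=c'=0$, so $\sigma_1(x)=x$ and $\sigma_2(y)=y$; as $x$ and $y$ generate $F[x]$ and $F[y]$, both $\sigma_1$ and $\sigma_2$ are trivial, so $\sigma=\mathrm{id}$. Therefore the stabilizer of $z$ in $G$ is trivial, $[F[z]:F]=mn=[L:F]$, and $F[z]=L=F[x,y]$, completing the argument for every non-zero $\alpha,\beta$.
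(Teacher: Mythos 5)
Your proof is correct and follows essentially the same route as the paper's: both reduce to showing that the stabilizer of $z$ in $\Gal(F[x,y]/F)$ is trivial, and both hinge on the crux observation that for $\sigma$ fixing $z$ the common element $\sigma(x)-x$ lies in $F[x]\cap F[y]=F$ by normality, after which a translation $x\mapsto x+c$ with $c\neq 0$ is ruled out via the finite order of $\sigma$. The only cosmetic difference is the characteristic-$p$ endgame: the paper assumes WLOG $p\nmid m$ and deduces $\sigma\in S_x$ from $\sigma^p,\sigma^m\in S_x$ together with $\gcd(p,m)=1$, whereas you derive $p\mid m$ and $p\mid n$ symmetrically from $rc=0$ and Lagrange and contradict (C3) directly — an equally short and valid variant (and your explicit use of $\Gal(L/F)\cong G_1\times G_2$, while not needed by the paper, is harmless and justified by (C1) and (C2)).
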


\begin{proof} It suffices to prove the result for $\alpha =\beta=1$. It is
well-known (\cite{DF}, \S 14.4, Corollary 22) that $F[x,y]/F$ is a
Galois extension of degree $mn$.

For $z\in K$ let $S_z$ be the stabilizer of $z$ in $\Gal(K/F)$. We
need to show that $S_{x+y}$ is trivial. Since $x,y$ generate
$F[x,y]$ over $F$, this is equivalent to $S_{x+y}\subseteq S_x\cap
S_y$.

Let $\sigma\in S_{x+y}$. Then
$$
\sigma(x+y)=x+y,
$$
so
\begin{equation}
\label{crux} \sigma(x)-x=-(\sigma(y)-y).
\end{equation}
Since $F[x]/F$ and $F[y]/F$ are normal, this common element, say
$f$, belongs to $F[x]\cap F[y]=F$.

Suppose first $\chr(F)=0$. Then $\sigma(x)=x+f$ implies $f=0$,
since $\sigma$ has finite order. It follows that $\sigma\in
S_x\cap S_y$.

Suppose next $F$ has prime characteristic $p$. Without loss of
generality we may assume that $p\nmid m$. From
$$
\sigma(x)=x+f
$$
we infer
$$
\sigma^{p}(x)=x,
$$
so $\sigma^p\in S_x$. Now $G_x$ is normal in $G$ and $G/G_x$ has
order $m$, so $\sigma^m\in G_x$. Since $\gcd(p,m)=1$, we deduce
$\sigma\in S_x$, which implies $f=0$ and a fortiori $\sigma\in
S_y$, as required.
\end{proof}

\begin{cor}\label{cw} Let $K/F$ be a finite abelian Galois extension. If $x,y\in K$ have degrees $m,n$ over $F$
and $\gcd(m,n)=1$ then $F[x,y]=F[\alpha x+ \beta y]$ for all
$\alpha ,\beta \in F^\times$.
\end{cor}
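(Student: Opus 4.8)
The plan is to deduce this as an immediate consequence of Theorem~\ref{gen}: since the conclusions of the corollary and of Theorem~\ref{gen} are identical (namely $F[x,y]=F[\alpha x+\beta y]$ for all non-zero $\alpha,\beta\in F$), it suffices to check that the three hypotheses (C1)--(C3) of that theorem are satisfied under the standing assumptions that $K/F$ is a finite abelian Galois extension and that $\gcd(m,n)=1$, where $m=[F[x]:F]$ and $n=[F[y]:F]$.

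First I would verify (C1). Here the abelian hypothesis is exactly what is used: writing $G=\Gal(K/F)$, which is abelian, every subgroup of $G$ is normal, so by the fundamental theorem of Galois theory every intermediate field of $K/F$ corresponds to a normal subgroup and is therefore a Galois extension of $F$. In particular $F[x]$ and $F[y]$ are intermediate fields, so $F[x]/F$ and $F[y]/F$ are Galois, giving (C1). Next I would verify (C2) by a degree count: the field $L=F[x]\cap F[y]$ is contained in both $F[x]$ and $F[y]$, so $[L:F]$ divides both $m$ and $n$, hence divides $\gcd(m,n)=1$, forcing $L=F$. Finally (C3) holds vacuously, since $\gcd(m,n)=1$ means that no prime at all divides $\gcd(m,n)$; in particular, if $\chr(F)=p>0$ then $p\nmid\gcd(m,n)$.

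Having checked (C1)--(C3), I would simply invoke Theorem~\ref{gen} to conclude. There is no substantial obstacle: the corollary is a specialization of the theorem, and the only point meriting attention is the first one, namely that the abelian assumption (rather than mere normality of $K/F$) is what guarantees the \emph{sub}extensions $F[x]/F$ and $F[y]/F$ are themselves Galois, as required by (C1). The coprimality $\gcd(m,n)=1$ then does double duty, delivering both (C2) and (C3) at once.
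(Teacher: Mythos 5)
Your proof is correct and follows essentially the same route as the paper: the corollary is stated there as an immediate specialization of Theorem~\ref{gen}, with the remark after it (that the abelian hypothesis ensures every subgroup of $\mathrm{Gal}(K/F)$ is normal) matching exactly your verification of (C1), while (C2) and (C3) follow from $\gcd(m,n)=1$ just as you argue. Nothing is missing; your observation that (C3) holds vacuously is precisely why the paper can drop that condition in the coprime case.
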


The condition that $G=\mathrm{Gal}(K/F)$ be abelian in Corollary
\ref{cw} is placed to ensure that every subgroup of $G$ be normal.
The groups for which this condition holds are called Dedekind
groups. Every Dedekind group is the direct product of the
quaternion group $Q_8$ with an abelian group \cite{R}, so not much
is gained by relaxing our hypothesis so that $G$ be a Dedekind
group.

Condition (C3) of Theorem \ref{gen} is redundant if $F$ is a
finite field or, more generally, if the Galois group of $F[x,y]/F$
is cyclic. Indeed, in this case $$[F[x]\cap F[y]:F]=\gcd(m,n),$$
so $F[x]\cap F[y]=F$ if and only if $\gcd(m,n)=1$.

However, (C3) cannot be dropped entirely, as the following example
shows. This example also illustrates the fact that the element
(\ref{crux}) of $F$ need not be zero if (C3) fails.

\begin{exa}\label{pedo}{\rm Given a prime $p$, let $E=F_{p^2}$. Let $X,Z$ be algebraically
independent over $E$ and let $q=Z^{p^2}-Z-X\in E[X,Z]$. It is not
difficult to see that $q$ is irreducible in $E[Z][X]$, hence in
$E[X,Z]$ and therefore in $E[X][Z]$. Let $F=E(X)$. It follows from
Gauss' Lemma that $q\in F[Z]$ is irreducible.  Let $\alpha$ be a
root of $q$ in some extension of~$F$ and set $K=F[\alpha]$. Then
$\alpha+a\in K$ is also a root of $q$ for every $a\in E$, so $K/F$
is a Galois extension of degree $p^2$. Let $G=\mathrm{Gal}(K/F)$.
For each $a\in E$ let $\sigma_a\in G$ be given by $\alpha\mapsto
\alpha+a$. Then $a\mapsto\sigma_a$ is a group isomorphism from the
underlying additive group of $E$ onto $G$. In particular, $G\cong
C_p\times C_p$.

Let $$x=\alpha(\alpha+1)\times\cdots \times(\alpha+(p-1)).$$ Then
\begin{equation}
\label{ex} x=\alpha^p-\alpha.
\end{equation}
Since $\alpha$ has degree $p^2$ over $F$, it follows that $x$ is
not in $F$. On the other hand,
$$
x^p+x-X=\alpha^{p^2}-\alpha^p+\alpha^p-\alpha-X=\alpha+X-\alpha-X=0.
$$
Thus $x$ is a root of $Z^p+Z-X\in F[Z]$. Since $[F[x]:F]>1$ and
$[K:F]=p^2$, it follows that $[F[x]:F]=p$.

Since $G$ is abelian, $F[x]/F$ is a Galois extension of degree
$p$. Here $F[x]$ is the fixed field of $\sigma\in G$, given by
$\alpha\mapsto \alpha+1$. In fact,
$$
x=\alpha\times
\alpha^\sigma\times\cdots\times\alpha^{\sigma^{p-1}}.
$$

Next take $a$ in $E$ but not in $F_p$, and let
$$
y=\alpha(\alpha+a)\times\cdots \times(\alpha+(p-1)a).
$$
Then for $\beta=\alpha/a$, we have
\begin{equation}
\label{ey} y=a^p y/a^p=a^p \beta(\beta+1)\times\cdots
(\beta+(p-1))=a^p(\beta^p-\beta)=\alpha^p-a^{p-1}\alpha.
\end{equation}
In particular, $y$ is not in $F$. On the other hand,
$$
y^p+a^{1-p}y-X=\alpha+X-a^{1-p}\alpha^p+a^{1-p}\alpha^p-\alpha-X=0.
$$
Thus, $y$ has minimal polynomial $Z^p+a^{1-p}Z-X$ over $F$, the
extension $F[y]/F$ is Galois of degree $p$, and $F[y]$ is the
fixed field of $\tau\in G$, given by $\alpha\mapsto \alpha+a$,
with
$$
y=\alpha\times \alpha^\tau\times\cdots\times\alpha^{\tau^{p-1}}.
$$
Note that $\sigma$ and $\tau$ both have order $p$ but, by the
choice of $a$, $\langle\sigma\rangle\cap \langle\tau\rangle$ is
trivial. In particular, $F[x]$ and $F[y]$ are different subfields
of $K$. Degree considerations imply that $F[x]\cap F[y]=F$.

We wish to find all $b,c\in F$ different from 0 such that
$K=F[bx+yc]$. Clearly this is equivalent to finding all $b\in F$,
$b\neq 0$, such that $K=F[x+by]$, and we claim that this holds if
and only if $b\notin F_p\cdot a$. Thus, all $x+fay$, where $f\in
F_p$ is non-zero, have degree $p$ over $F$, while all $x+by$,
where $b\in F$ is not an $F_p$-scalar multiple of~$a$, have degree
$p^2$ over $F$.

To see the claim, note that first of all that $G$ consists of all
$\sigma^i\tau^j$, where $i,j\in F_p$. Now (\ref{ex}) gives
$$
x^{\sigma^i\tau^j}=(\alpha+ja)^p-(\alpha+ja)=x+j(a^p-a),
$$
while (\ref{ey}) yields
$$
(by)^{\sigma^i\tau^j}=b[(\alpha+i)^p-a^{p-1}(\alpha+i)]=b[y+i(1-a^{p-1})]=by+ib(1-a^{p-1}).
$$
Therefore
$$
(x+by)^{\sigma^i\tau^j}=x+by+ja(a^{p-1}-1)+ib(1-a^{p-1}).
$$
By the choice of $a$, we have $a^{p-1}\neq 1$. Therefore
$\sigma^i\tau^j$ fixes $x+by$ if and only if $ja=ib$. Clearly
$i,j$ are both zero or both non-zero. In the first case we deal
with the identity element of $G$, which fixes every element of
$K$. In the second case $\sigma^i\tau^j$ fixes $x+by$ if and only
if $b=j/i\times a$. All in all, $x+by$ has non-trivial stabilizer
in $G$ if and only if $b\in F_p\cdot a$, as claimed. }
\end{exa}

Further information about generalized Artin-Schreier polynomials
of the form $Z^{p^n}-Z-a$ and their associated Galois extensions,
including the computation of primitive elements for every
intermediate field, can be found in \cite{GS}.

\begin{theorem}\label{t}
Let $K/F$ be a field extension, let $x,y\in K$ of degrees $a=md$
and $b=md$, respectively, with $d=\gcd(a,b)$, and let $E=F[x]\cap
F[y]$. Given $\alpha,\beta\in F^{\times}$ let $z=\alpha x+\beta y$
and let $p_{z,E}$ be the minimal polynomial of $z$ over $E$.
Assume that:
\begin{enumerate}[(i)]
 \item $F[x]/F$ and $F[y]/F$ are finite Galois extensions.
 \item $K=F[x,y]$ has degree $mnd=\mathrm{lcm}(a,b)$ over $F$.
\end{enumerate}
Then the degree of $z$ over $F$ is $mne$, with $e=d/|S|$ and $S$
is the stabilizer of $p_{z,E}$ in $\Gal(E/F)$. Moreover, if $K/F$
is cyclic and $p|mn$ for every factor $p$ of $d$ then $K=F[z]$. In
particular, if $K/F$ is cyclic and $d|mn$ then $K=F[z]$.
\end{theorem}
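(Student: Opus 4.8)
The plan is to work entirely inside $K$ via Galois theory, exploiting that the compositum $K=F[x,y]=F[x]F[y]$ is itself Galois over $F$. First I would record the basic group-theoretic data. Put $G=\Gal(K/F)$, $H_x=\Gal(K/F[x])$ and $H_y=\Gal(K/F[y])$; hypothesis (i) makes $H_x,H_y$ normal in $G$, and hypothesis (ii) together with the compositum degree formula forces $[E:F]=d$, hence $[F[x]:E]=m$, $[F[y]:E]=n$, and $\gcd(m,n)=1$ (since $d=\gcd(md,nd)=d\gcd(m,n)$). Counting orders gives $|H_x|=n$, $|H_y|=m$, while $H_x\cap H_y=\Gal(K/F[x,y])=1$ and $H_xH_y=\Gal(K/E)$ has order $mn$; thus $\Gal(E/F)\cong G/H_xH_y$ has order $d$, and this is the group in which the stabilizer $S$ of $p_{z,E}$ lives.

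Next I would pin down the degree of $z$ over $E$. Since $E[x]=F[x]$ and $E[y]=F[y]$ are Galois over $E$ with $E[x]\cap E[y]=E$ and $\gcd(m,n)=1$, Theorem \ref{gen} applies over the base field $E$ (condition (C3) being vacuous), giving $E[z]=E[x,y]=K$. Hence $z$ has degree $[K:E]=mn$ over $E$, so $\deg p_{z,E}=mn$; moreover the stabilizer $S_z=\Gal(K/F[z])$ of $z$ in $G$ satisfies $S_z\cap H_xH_y=\Gal(K/E[z])=1$.

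The core step is the degree formula over $F$. Because $E/F$ is Galois, each $\sigma\in G$ sends $p_{z,E}$ to $p_{\sigma z,E}=\sigma(p_{z,E})$, a polynomial depending only on $\sigma|_E\in\Gal(E/F)$; so the number of distinct polynomials among the $\sigma(p_{z,E})$ is exactly $[\Gal(E/F):S]=d/|S|=e$. On the other hand the roots of the separable polynomial $p_{z,F}$ form a single $G$-orbit, which is partitioned into $H_xH_y$-orbits, each of size $mn$ (the common degree of the conjugates over $E$), and each $H_xH_y$-orbit is the root set of one $\sigma(p_{z,E})$. Thus $p_{z,F}=\prod\sigma(p_{z,E})$ over the $e$ distinct factors, whence $\deg_F z=mn\cdot e=mne$. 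I expect this bookkeeping---matching distinct $E$-conjugate factors of $p_{z,F}$ with $H_xH_y$-orbits and with cosets of $S$---to be the main obstacle, since one must verify the factors occur with multiplicity one and that their count is governed precisely by $S$.

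Finally, for the cyclic assertions I would reduce $K=F[z]$ to $S_z=1$: since $S_z$ embeds into the cyclic group $\Gal(E/F)\cong\Z/d$ and $S_z\cap H_xH_y=1$, it is cyclic of order dividing $d$, and $|S|=1$ iff $S_z=1$. Assume $S_z\neq 1$ and pick a prime $p\mid|S_z|$; then $p\mid d$, so by hypothesis $p\mid mn$, and as $\gcd(m,n)=1$ exactly one of $p\mid m$, $p\mid n$ holds. Writing $G=\langle g\rangle$, the unique subgroup of order $p$ is $\langle g^{mnd/p}\rangle$, and a direct divisibility check shows it lies in $H_y$ when $p\mid m$ and in $H_x$ when $p\mid n$. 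A generator $\sigma$ of this subgroup then fixes one of $x,y$; since it also fixes $z=\alpha x+\beta y$ with $\alpha,\beta\in F^{\times}$, it fixes the other as well, so $\sigma\in H_x\cap H_y=1$, contradicting its order $p$. Hence $S_z=1$ and $K=F[z]$. The concluding ``in particular'' is then immediate, as $d\mid mn$ guarantees that every prime factor of $d$ divides $mn$.
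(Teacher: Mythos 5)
Your proposal is correct and takes essentially the same route as the paper's proof: you base-change to $E=F[x]\cap F[y]$ with $[E:F]=d$, invoke Theorem \ref{gen} over $E$ (where $\gcd(m,n)=1$ makes condition (C3) automatic) to get $K=E[z]$ of degree $mn$, deduce $\deg_F z=mnd/|S|$ from the factorization of the minimal polynomial of $z$ over $F$ into the distinct $\Gal(E/F)$-conjugates of $p_{z,E}$, and in the cyclic case use the unique subgroup of order $p$ generated by $\sigma^{mnd/p}$, which lies in exactly one of $H_x$, $H_y$ and so cannot fix $z=\alpha x+\beta y$. Your orbit-counting formulation of the degree formula and the contrapositive framing of the cyclic step (deriving a contradiction from $\sigma\in S_z$ rather than showing directly that $\sigma^{mnd/p}$ moves $z$) are only cosmetic variants of the paper's transversal-product argument.
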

\begin{proof} That $K/F$ are $E/F$ are Galois extensions follows from \cite{DF},
\S 14.4, Proposition 21. Here $[E:F]=d$, by \cite{DF}, \S 14.4,
Corollary~20.

Now $K/E$ is a Galois extension (\cite{DF}, \S 14.2, Theorem 14)
of degree $mn$, where $\gcd(m,n)=1$. Moreover, $E[x]=F[x]$ has
degree $m$ over $E$ and $E[y]=F[y]$ has degree $n$ over $E$. It
follows from Theorem \ref{gen} that $K=E[z]$. Thus the degree $z$
over $E$ is $mn$. If $T$ is a transversal for $S$ in $\Gal(E/F)$
then
$$
\underset{\sigma\in T}\Pi p_{z,E}^\sigma
$$
is the minimal polynomial of $z$ over $F$ and therefore the degree
of $z$ over $F$ is $mn|\Gal(E/F)|/|S|=mnd/|S|$.

Suppose next $\mathrm{Gal}(K/F)=\langle\sigma\rangle$ is cyclic.
We claim that if $p$ is a prime and $p|mn$ then
$\sigma^{mnd/p}(z)\neq z$. Indeed, if $p|m$ then
$\sigma^{mnd/p}(y)=\sigma^{bm/p}(y)=y$ and, since $\gcd(m,n)=1$,
we infer $p\nmid n$, so $\sigma^{mnd/p}(x)=\sigma^{an/p}(x)\neq
x$, whence $\sigma^{mnd/p}(z)\neq z$. The case $p|n$ is analogous.
This proves the claim. Consequently, if every prime factor $p$ of
$d$ divides $mn$ then necessarily, $S=1$, so $K=F[z]$.
\end{proof}

\begin{theorem}\label{q} Let $K/F$ be a finite separable field
extension, and let $x,y\in K$ have respective degrees $a=md$ and
$b=nd$ over $F$, where $d=\gcd(a,b)$. Let $A$ be the number of
distinct prime factors of $d$ not dividing $mn$. If $|F|>A+1$ then
there is $0\neq\alpha\in F$ such that $F[x,y]=F[x+\alpha y]$.
\end{theorem}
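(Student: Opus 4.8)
The plan is to reduce to the case of a finite base field and then to pin down $\alpha$ by a counting argument governed by the cyclic Galois structure. By the standard proof of the theorem of the primitive element, if $F$ is infinite then all but finitely many $\alpha\in F$ satisfy $F[x+\alpha y]=F[x,y]$, and since $F$ is infinite we may take such an $\alpha$ to be non-zero. Thus I would assume from the outset that $F$ is finite. Replacing $K$ by $F[x,y]$, I may assume $K=F[x,y]$; since $K/F$ is then a finite extension of finite fields it is cyclic, so conditions (i) and (ii) of Theorem \ref{t} hold automatically (every intermediate field is Galois, and $[K:F]=\mathrm{lcm}(a,b)=mnd$ because $\gcd(m,n)=1$). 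Write $G=\Gal(K/F)=\langle\sigma\rangle$, cyclic of order $mnd$, and set $z=x+\alpha y$ with $\alpha\in F^\times$ to be determined.

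Because $G$ is cyclic, $K=F[z]$ is equivalent to the stabilizer $S_z$ of $z$ in $G$ being trivial, and this in turn is equivalent to demanding that for every prime $p$ dividing $mnd$ the unique subgroup of order $p$, namely $\langle\sigma^{mnd/p}\rangle$, does not fix $z$. The primes dividing $mnd$ split into those dividing $mn$ and the $A$ primes $p_1,\dots,p_A$ that divide $d$ but not $mn$. For a prime $p\mid mn$, the claim established inside the proof of Theorem \ref{t} shows $\sigma^{mnd/p}(z)\neq z$ for every choice of $\alpha$, so these primes impose no constraint on $\alpha$.

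It remains to control the bad primes $p_1,\dots,p_A$. For each $i$ put $\tau_i=\sigma^{mnd/p_i}$. A short stabilizer computation shows $\tau_i(x)\neq x$ and $\tau_i(y)\neq y$: the stabilizer of $x$ is $\langle\sigma^{a}\rangle$, and since $mnd/p_i=a\,(n/p_i)$ with $p_i\nmid n$ we get $a\nmid mnd/p_i$, whence $\tau_i(x)\neq x$; the argument for $y$ is symmetric using $p_i\nmid m$. Hence $u_i:=\tau_i(x)-x$ and $v_i:=\tau_i(y)-y$ are both non-zero, and
\[
\tau_i(z)-z=u_i+\alpha v_i .
\]
Thus $\tau_i$ fixes $z$ only for the single value $\alpha=-u_i/v_i\in K^\times$; this rules out at most one element of $F$ (and none when $-u_i/v_i\notin F$). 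Choosing $\alpha\in F^\times$ that avoids the at most $A$ ratios $-u_1/v_1,\dots,-u_A/v_A$ then forces $S_z=1$, i.e.\ $K=F[z]$.

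Finally, the counting closes the argument: one must avoid the at most $A$ ratios $-u_i/v_i$ together with $0$, a total of at most $A+1$ forbidden elements of $F$, and the hypothesis $|F|>A+1$ guarantees an admissible $\alpha$ survives. I expect the main obstacle to be this last stage, specifically verifying that each $u_i$ and $v_i$ is non-zero so that every bad prime excludes only a single value of $\alpha$; it is exactly this one-value-per-bad-prime phenomenon that makes the bound $|F|>A+1$ both natural and, as Theorem \ref{unomas} will confirm, sharp.
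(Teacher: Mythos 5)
Your proof is correct and follows essentially the same route as the paper's: reduce to a finite base field via the standard primitive-element argument, then count in the cyclic Galois group, using the claim inside the proof of Theorem \ref{t} to dispose of primes dividing $mn$ and observing that each of the $A$ primes dividing $d$ but not $mn$ excludes at most one $\alpha\in F^{\times}$ because $\tau_i(x)-x$ and $\tau_i(y)-y$ are both non-zero, so $|F|>A+1$ suffices. The only cosmetic difference is that you verify triviality of the stabilizer of $z$ directly at every prime of $mnd$, whereas the paper first invokes the degree formula $mne_\alpha$ from Theorem \ref{t} to restrict attention to primes dividing $d$; the underlying computation is identical.
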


\begin{proof} By the standard proof of the theorem of
the primitive element (see \cite{Ar}, Theorem 26) we may assume
that $F$ is finite.

Let $K=F[x,y]$. Since $K/F$ is cyclic, we see that $[F[x]\cap
F[y]:F]=d$ and $[K:F]=\mathrm{lcm}(a,b)$.

By Theorem \ref{t}, if $\alpha\in F$, $\alpha\neq 0$, then
$x+\alpha y$ has degree $mne_\alpha$ over $F$ for some
$e_\alpha|d$. We wish to select $\alpha$ so that $e_\alpha=d$.

We have $\mathrm{Gal}(K/F)=\langle \sigma\rangle$, where $\sigma$
has order $mnd$. Suppose $p$ is a prime factor of $d$. If $p|mn$
then $\sigma^{mnd/p}(x+\alpha y)\neq x+\alpha y$, while if $p\nmid
mn$ then $\sigma^{mnd/p}(x)\neq x$ and $\sigma^{mnd/p}(y)\neq y$,
as seen in the proof of Theorem \ref{t}. In the latter case there
is at most one $\alpha\neq 0$ in $F$ such that $\sigma^{mnd/p}(x+
\alpha y)=x+ \alpha y$.

If $|F|-1>A$ then there is $\alpha\neq 0$ in $F$ such that
$x+\alpha y\in$ is not fixed by $\sigma^{mnd/p}$ for any prime
factor $p$ of $d$ such that $d\nmid mn$, so $e_\alpha=d$ in this
case.
\end{proof}

\begin{theorem}\label{be} Let $K/F$ be a finite separable field extension.
Let $B$ be the number of distinct prime factors of $[K:F]$.
Suppose that $|F|>B-1$. Then, given any $x_1,\dots,x_\ell\in K$
such that $K=F[x_1,\dots,x_\ell]$, there is an $F$-linear
combination $z=\alpha_1x_1+\cdots+\alpha_\ell x_\ell$ such that
all $\alpha_i\neq 0$ and $K=F[z]$.
\end{theorem}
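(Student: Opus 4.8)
The plan is to reduce to the case of a finite base field and then argue by induction on $\ell$, using Theorem \ref{q} as the engine that fuses two generators at a time. The case of infinite $F$ is classical: since $K/F$ is finite separable it has only finitely many intermediate fields, so the set of tuples $(\alpha_1,\dots,\alpha_\ell)$ for which $z=\alpha_1x_1+\cdots+\alpha_\ell x_\ell$ fails to generate $K$ is contained in a finite union of proper $F$-subspaces of $F^\ell$ (for each maximal proper subfield $M$, the set $\{\alpha:\sum\alpha_ix_i\in M\}$ is an $F$-subspace, proper because the $x_i$ generate $K\not\subseteq M$), while the requirement that all $\alpha_i\neq0$ excludes only $\ell$ coordinate hyperplanes. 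An infinite field is not covered by finitely many proper affine subspaces, so a good $z$ exists. This mirrors the reduction used in the proofs of Theorems \ref{gen}--\ref{q}, so from now on I assume $F$ is finite.

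With $F$ finite, $K$ is finite and $\Gal(K/F)=\langle\sigma\rangle$ is cyclic of order $n=[K:F]$. Writing $p_1,\dots,p_B$ for the distinct primes dividing $n$, the unique subfield $K_j$ of index $p_j$ is the fixed field of $\sigma^{n/p_j}$, and the standard fact to use is that $K=F[z]$ if and only if $z\notin K_1\cup\cdots\cup K_B$. I would then induct on $\ell$. For $\ell=1$ we have $K=F[x_1]$ and $z=\alpha_1x_1$ works for every nonzero $\alpha_1$. For the inductive step, set $L=F[x_1,\dots,x_{\ell-1}]$; the inductive hypothesis applied to $L/F$ yields nonzero $\alpha_1,\dots,\alpha_{\ell-1}$ with $L=F[w]$, where $w=\alpha_1x_1+\cdots+\alpha_{\ell-1}x_{\ell-1}$. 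Since $K=F[w,x_\ell]$, Theorem \ref{q} applied to the pair $w,x_\ell$ produces a nonzero $\alpha_\ell$ with $K=F[w+\alpha_\ell x_\ell]$, and $z=w+\alpha_\ell x_\ell$ is then the desired combination with all coefficients nonzero.

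The delicate point, which I expect to be the main obstacle, is verifying that the hypothesis of Theorem \ref{q} is genuinely available at each step from the single assumption $|F|>B-1$. Writing $\deg_F w=md$ and $\deg_F x_\ell=nd$ with $d=\gcd$, Theorem \ref{q} requires $|F|>A+1$, where $A$ counts the primes of $d$ not dividing $mn$; since $[K:F]=mnd$, these primes form a subset of $\{p_1,\dots,p_B\}$, and the primes dividing $mn$ move $z$ automatically, exactly as in the proofs of Theorems \ref{t} and \ref{q}. The bookkeeping must show that these automatic primes absorb the gap between the crude requirement $A+1$ and the hypothesis $B-1$; the dangerous configurations are those in which $m$ and $n$ contribute few distinct primes, most acutely when $w$ and $x_\ell$ both have degree equal to $[K:F]$. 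Handling these—by exploiting that the offending primes are already forced to displace $z$, or by choosing the order in which the generators are fused so that the successive degrees spread their prime factors out—is where the real work lies, and where the exactness of the bound $|F|>B-1$ (guaranteed sharp by Theorem \ref{unomas}) is earned.
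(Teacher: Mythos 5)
Your plan coincides in outline with the paper's own proof---reduce to finite $F$ (your intermediate-fields argument for infinite $F$ is a correct expansion of the paper's citation of the standard primitive-element proof), induct on $\ell$, and fuse the inductively obtained generator $w$ of $F[x_1,\dots,x_{\ell-1}]$ with $x_\ell$ via Theorem \ref{q}---but it stops short of a proof exactly where you say ``the real work lies,'' and that step is the entire content of the theorem beyond Theorem \ref{q}. The missing idea is a short exponent-comparison dichotomy, not heavy bookkeeping and not a clever ordering of the generators. Write $a=p_1^{a_1}\cdots p_B^{a_B}$ and $b=p_1^{b_1}\cdots p_B^{b_B}$ for the degrees over $F$ of the two elements being fused, with $a_i,b_i\geq 0$. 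If $a_i\leq b_i$ for every $i$, then $a\mid b$, and over a finite field this forces $F[w]\subseteq F[x_\ell]$, so $x_\ell$ alone generates $K$ and Theorem \ref{q} is never invoked (symmetrically if $b_i\leq a_i$ for all $i$). Otherwise there exist $i\neq j$ with $a_i>b_i$ and $b_j>a_j$; then $p_i\mid m$ and $p_j\mid n$, where $d=\gcd(a,b)$, $m=a/d$, $n=b/d$, so at least two of the $B$ primes divide $mn$, whence the quantity $A$ of Theorem \ref{q} (primes of $d$ not dividing $mn$) satisfies $A\leq B-2$, and the hypothesis $|F|>B-1$ gives precisely $|F|>A+1$. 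In particular, the configuration you single out as most acute, $\deg w=\deg x_\ell=[K:F]$, is not dangerous at all: there $a=b$ and you are in the divisibility branch, where no appeal to Theorem \ref{q} occurs. Without this two-line counting argument your proposal is an announced plan rather than a proof.

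One wrinkle, which your completed argument would share with the paper's own write-up: in the divisibility branch the chosen $z$ is $w$ or $x_\ell$ alone, i.e., the combination has a zero coefficient, while the statement demands all $\alpha_i\neq 0$. The all-nonzero clause can in fact fail literally in degenerate cases---take $F=F_2$, $K=F_4$ and $x_1=x_2$ a generator, so that $B=1$ and $|F|>B-1$ holds, yet the only combination with both coefficients nonzero is $x_1+x_2=0$---so the conclusion should be read as asserting that \emph{some} linear combination generates; that is what your strategy, once repaired by the dichotomy above, actually delivers, exactly as the paper's proof does.
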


\begin{proof} By the standard proof of the theorem of
the primitive element (see \cite{Ar}, Theorem 26) we may assume
that $F$ is finite. In this case, we argue by induction on $\ell$.
If $\ell=1$ there is nothing to do.

Suppose $\ell=2$ and let $p_1,\dots,p_B$ be the distinct prime
factors of $[K:F]$. Let $a=p_1^{a_1}\cdots p_B^{a_B}$ and
$b=p_1^{b_1}\cdots p_s^{b_s}$ be the respective degrees of $x_1$
and $x_2$ over~$F$, where $a_i,b_j\geq 0$. If all $a_i\leq b_i$ we
take $z=x_2$ and if all $b_j\leq a_j$ we take $z=x_1$. Suppose
next $a_i>b_i$ and $b_j>a_j$ for some $1\leq i\neq j\leq B$. Let
$d=\gcd(a,b)$, $m=a/d$, $n=b/d$, and let $A$ be the number of
distinct prime factors of $d$ not dividing $mn$. Since $p_i|m$ and
$p_j|n$, we have $A\leq B-2$. Therefore $|F|-1>B-2\geq A$, so
$K=F[z]$ for some $F$-linear combination
$z=\alpha_1x_1+\alpha_2x_2$, $\alpha_i\neq 0$, by Theorem \ref{q}.

Suppose $\ell>2$ and the result is true for $\ell-1$. Let $C$ be
the number of distinct prime factors of
$[F[x_1,\dots,x_{\ell-1}]:F]$. Then $C\leq B$, so $|F|>C-1$. By
inductive assumption there is an $F$-linear combination
$u=\beta_1x_1+\cdots+\beta_{\ell-1}x_{\ell-1}$, $\beta_i\neq 0$,
such that $F[x_1,\dots,x_{\ell-1}]=F[u]$. Therefore
$K=F[u,x_\ell]$. By the case $\ell=2$ we can find an $F$-linear
combination $z=\gamma_1u+\gamma_2x_\ell$, $\gamma_i\neq 0$, such
that $K=F[z]$. Since $z=\alpha_1x_1+\cdots+\alpha_\ell x_\ell$,
with all $0\neq \alpha_i\in F$, the proof is complete.
\end{proof}

\begin{theorem}\label{unomas} Given a prime power $q$ and $A\geq q-1$, there exist $a,b\geq 1$
and $x,y$ in some extension of $F=F_q$ such that: $d=\gcd(a,b)$
has exactly $A$ distinct prime factors which are not factors of
$a/d\times b/d$; $x$ has degree $a$ over $F$; $y$ has degree $b$
over $F$; no $F$-linear combination of $x,y$ generates $F[x,y]$
over $F$.
\end{theorem}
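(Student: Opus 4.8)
The plan is to reduce to a finite base field and then build an explicit pair $x,y$ inside a suitable cyclic extension of $F=F_q$. By the standard proof of the theorem of the primitive element (\cite{Ar}, Theorem 26) it suffices to treat the case $F=F_q$. Fix $A\geq q-1$ distinct primes $p_1,\dots,p_A$, all different from the characteristic of $F$, set $d=p_1\cdots p_A$, and choose integers $m,n>1$ that are relatively prime to each other, to $d$, and to the characteristic. Put $a=md$ and $b=nd$, so that $\gcd(a,b)=d$ has exactly $A$ distinct prime factors, none of which divides $mn$. Inside $K=F_{q^{mnd}}$ we have pairwise linearly disjoint subfields $M,N,D_1,\dots,D_A$ of pairwise coprime degrees $m,n,p_1,\dots,p_A$ over $F$, with $K=M N D_1\cdots D_A$ and $\Gal(K/F)=\langle\sigma\rangle$ generated by the Frobenius $\sigma\colon t\mapsto t^q$.

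Next I would choose generators $\mu,\nu,\eta_1,\dots,\eta_A$ of $M,N,D_1,\dots,D_A$ respectively, pick scalars $\kappa_1,\dots,\kappa_A\in F^\times$ with $\{-\kappa_1,\dots,-\kappa_A\}=F^\times$ (possible because $A\geq q-1=\abs{F^\times}$), and set $x=\mu+\sum_i\kappa_i\eta_i$ and $y=\nu+\sum_i\eta_i$. For each $i$ the element $\tau_i=\sigma^{mnd/p_i}$ has order $p_i$, restricts to a generator of $\Gal(D_i/F)$, and fixes $M$, $N$ and every $D_j$ with $j\neq i$; its fixed field $L_i$ has degree $mnd/p_i$ over $F$. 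Writing $\delta_i=\tau_i-\mathrm{id}$, a direct computation gives $\delta_i(x)=\kappa_i\,\delta_i(\eta_i)$ and $\delta_i(y)=\delta_i(\eta_i)$, whence $\delta_i(x-\kappa_i y)=0$ and $x-\kappa_i y\in L_i$. Thus $x+(-\kappa_i)y$ is fixed by the nontrivial automorphism $\tau_i$, so it lies in $L_i$ and has degree at most $mnd/p_i<mnd$ over $F$. As the scalars $-\kappa_i$ run through all of $F^\times$, no element $x+\gamma y$ with $\gamma\in F^\times$ generates $K$; since $x$ and $y$ separately lie in the proper subfields $M D_1\cdots D_A$ and $N D_1\cdots D_A$, no $F$-linear combination of $x,y$ generates $K=F[x,y]$.

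The step that needs the most care, and the only place where the smallness of $F$ could bite, is the verification that $x$ and $y$ really have degrees $a=md$ and $b=nd$, i.e.\ that $x$ generates $M D_1\cdots D_A$ and $y$ generates $N D_1\cdots D_A$. This is itself a primitive-element question of exactly the kind the paper studies, now in the unfavorable small-field range. I would handle it using the hypothesis that $m,n,p_1,\dots,p_A$ are all prime to the characteristic: for a cyclic extension $E/F$ of finite fields whose degree is prime to the characteristic, any generator $\theta$ of $E$ satisfies $g(\theta)-\theta\notin F$ for every nontrivial $g\in\Gal(E/F)$, because $\{z:(g-1)z\in F\}$ equals the fixed field of $g$ in that case. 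Applying this to $M$, $N$ and each $D_i$, and using the linear disjointness of these factors, one checks that any automorphism of $K$ fixing $x$ must act trivially on each factor and hence is the identity; therefore the stabilizer of $x$ in $\Gal(M D_1\cdots D_A/F)$ is trivial and $x$ generates $M D_1\cdots D_A$. The same argument applies to $y$, completing the construction.
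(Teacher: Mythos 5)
Your construction is correct and is in essence the same as the paper's own proof: both place generators of pairwise linearly disjoint subfields of pairwise coprime degrees into $x$ and $y$, with coefficients arranged so that for each $\gamma\in F^\times$ the combination $x+\gamma y$ loses one prime-degree component (your Galois-theoretic reformulation via $\tau_i=\sigma^{mnd/p_i}$ is exactly the cancellation of the $\eta_i$-term), while $x$ and $y$ themselves each omit one of the two auxiliary coprime degrees $n$, $m$. The differences are local, and worth noting. First, to get the degrees of $x$ and $y$ the paper simply cites Corollary \ref{cw}, which needs no hypothesis on the characteristic here because the constituent degrees are pairwise coprime, so condition (C3) of Theorem \ref{gen} is automatic; you instead add the hypothesis that $m,n,p_1,\dots,p_A$ are prime to the characteristic and prove the primitive-element step by hand, using that $\{z:(g-1)z\in F\}$ equals the fixed field of $g$ when the order of $g$ is prime to $p$ (an additive Hilbert 90/trace argument) together with the decomposition of $(g-1)x$ into components in the disjoint factors. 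That argument is sound --- your terse ``one checks'' does go through, since each component of $(g-1)x$ lies in the intersection of its factor with the compositum of the others, hence in $F$ --- but the coprime-to-characteristic restriction is unnecessary baggage that disappears if you quote Corollary \ref{cw} instead. Second, you handle all $A\geq q-1$ uniformly by letting $i\mapsto -\kappa_i$ be surjective but possibly non-injective onto $F^\times$, whereas the paper first treats $A=q-1$ (indexing the primes by $F^\times$) and then adjoins $A-(q-1)$ extra components common to both $x$ and $y$; your variant is slightly cleaner and equally valid, as repeated values of $\kappa_i$ merely make several components of $x-\kappa_i y$ cancel at once, which does no harm. (Your opening reduction ``it suffices to treat $F=F_q$'' is vacuous, since the theorem is stated over $F_q$, but harmless.)
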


\begin{proof} All degrees considered below are
taken over $F=F_q$. Let $(p_i)_{i\in F^\times}$ be a family of
distinct primes in $\N$, and let $r,s$ be primes in $\N$ distinct
from each other and all $p_i$. Let $d$ be the product of all $p_i$
and let $t=drs$. Let $K$ be an extension of $F$ of degree $t$. For
$i\in F^\times$, let $u_i\in K$ be an element of degree $p_i$. We
further take $x_0,y_0\in K$ of degree $r$ and $s$, respectively.
Let $x$ be the sum of all $iu_i$ and $x_0$, and let $y$ be the sum
of all $u_i$ and $y_0$. It follows from Corollary \ref{cw} that
$x,y$ have respective degrees $a=t/s$ and $b=t/r$, whence
$K=F[x,y]$. Moreover, $\gcd(a,b)=d$. The number of distinct prime
factors of $d$ that are not factors of $a/d\times b/d=rs$ is
$q-1$. We claim that no $F$-linear combination of $x,y$ generates
$K$ over $F$. It suffices to verify this for $x,y$ and all $x-iy$,
$i\in F^\times$. Now $s$ does not divide the degree of $x$, $r$
does not divide the degree of of $y$, and $p_i$ does not divide
the degree of $x-iy$, as required.

If $A>q-1$, we can easily modify the above construction by
selecting $k=A-(q-1)$ further primes, say $q_1,\dots,q_k$,
multiplying the previous choices of $d$ and $t$ by $q_1\cdots
q_k$, selecting $v_1,\dots,v_k\in K$ of respective degrees
$q_1,\dots,q_k$, and adding the sum of the $v_i$ to the previous
choices of $x$ and $y$. The same conclusion follows.
\end{proof}

\section{Uniserial representations of abelian associative and Lie algebras}
\label{sec:uni}

\begin{theorem}
\label{abel} Let $F$ be a perfect field and let $A$ be a finite
dimensional commutative and associative algebra over $F$. Then the
following conditions are equivalent:

(a) $A=F[u]$ for some $u\in A$ whose minimal polynomial over $F$
is an $\ell$-power of an irreducible polynomial in $F[X]$.

(b) The regular module of $A$ is uniserial of length $\ell$.

(c) $A$ has a finite dimensional faithful uniserial representation
of length $\ell$.

Moreover, suppose any of the conditions (a)-(c) is satisfied. Then
$A$ has a unique irreducible module, up to isomorphism, namely the
residue field, say $R(A)$, of $A$. Let $N$ be the number of
distinct prime factors of $[R(A):F]$ and suppose that $|F|>N-1$.
Then, given any elements $x_1,\dots,x_n\in A$ such that
$A=F[x_1,\dots,x_n]$, there is an $F$-linear combination $z$ of
$x_1,\dots,x_n$ such that $A=F[z]$.
\end{theorem}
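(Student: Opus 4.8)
I would prove the three equivalences by showing $(a)\Rightarrow(b)\Rightarrow(c)\Rightarrow(a)$, and then establish the final linear-combination statement by reducing it to Theorem~\ref{be} applied to the residue field $R(A)$. The key structural fact underpinning everything is that a finite dimensional commutative associative $F$-algebra $A$ decomposes as a direct product of local algebras, and that $A$ is uniserial (in any of the senses above) precisely when this decomposition is trivial, i.e.\ $A$ itself is local with a unique maximal ideal $\mathfrak{m}=\rad(A)$ whose powers $A\supsetneq\mathfrak{m}\supsetneq\mathfrak{m}^2\supsetneq\cdots$ form the complete chain of ideals.

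\begin{proof}[Proof plan for the equivalences]
For $(a)\Rightarrow(b)$: if $A=F[u]$ with minimal polynomial $p^\ell$, then $A\cong F[X]/(p^\ell)$, whose ideals are exactly the $(p^k)/(p^\ell)$ for $0\le k\le \ell$; these are totally ordered and there are $\ell+1$ of them, so the regular module $A_A$ is uniserial of length $\ell$. For $(b)\Rightarrow(c)$: the regular module is a faithful representation, since for commutative $A$ an element $z$ acting as zero on $A$ satisfies $z\cdot 1=0$, hence $z=0$; so $(b)$ directly furnishes the faithful uniserial representation required in $(c)$. The substantive implication is $(c)\Rightarrow(a)$. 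Given a faithful uniserial module $V$ of length $\ell$, the annihilator chain forces $A$ to be local: the radical $\mathfrak{m}=\rad(A)$ must be the unique maximal ideal, and the powers of $\mathfrak{m}$ cut out the composition series of $V$. Here I would invoke the perfectness of $F$ together with the Jordan--Chevalley decomposition, exactly as the authors flag in the remarks following Theorem~\ref{abelintro}: for each generator of $A$ acting on $V$, its semisimple part generates a separable subalgebra isomorphic to a field, and one extracts a single $u$ whose minimal polynomial is $p^\ell$ for an irreducible $p$. This is the step I expect to be the main obstacle, since it requires showing not merely that $A$ is local but that it is \emph{monogenic} with the minimal polynomial being a pure prime power, and controlling the separability of the residue field is where perfectness is indispensable.
\end{proof}

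\begin{proof}[Proof plan for the final statement]
Assuming $(a)$--$(c)$, I would first identify the unique irreducible module. Since $A$ is local with maximal ideal $\mathfrak{m}$, the quotient $R(A)=A/\mathfrak{m}$ is a field, and every simple $A$-module is annihilated by $\mathfrak{m}$, hence is a module over $R(A)$; as $R(A)$ is a field the only simple module is $R(A)$ itself, and it is separable over $F$ because $F$ is perfect. Now given generators $x_1,\dots,x_n$ of $A$, their images $\bar x_1,\dots,\bar x_n$ generate $R(A)$ over $F$. Since $R(A)/F$ is a finite separable extension and $|F|>N-1$ with $N$ the number of distinct prime factors of $[R(A):F]$, Theorem~\ref{be} supplies an $F$-linear combination $\bar z=\alpha_1\bar x_1+\cdots+\alpha_n\bar x_n$, all $\alpha_i\neq 0$, with $R(A)=F[\bar z]$. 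Setting $z=\alpha_1 x_1+\cdots+\alpha_n x_n$, I would lift this primitivity back to $A$: the minimal polynomial of $z$ over $F$ reduces modulo $\mathfrak{m}$ to a power of the minimal polynomial of $\bar z$, which is irreducible of degree $[R(A):F]$, and a dimension count together with the local (uniserial) structure of $A$ forces $F[z]=A$. The delicate point in this last lifting is ensuring that the choice of $z$ does not degenerate in $A$ even though it is prescribed only modulo $\mathfrak{m}$; the uniserial hypothesis, which guarantees $A=F[u]$ for a single $u$ by part $(a)$, is exactly what makes $\dim_F A$ equal to $\ell\cdot[R(A):F]$ and pins down $F[z]=A$.
\end{proof}
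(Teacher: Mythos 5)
Your outline of the easy implications (a)$\Rightarrow$(b)$\Rightarrow$(c) matches the paper, but the two substantive steps both have genuine gaps. For (c)$\Rightarrow$(a) you explicitly defer the entire content ("this is the step I expect to be the main obstacle") and your gesture toward it --- "for each generator of $A$ acting on $V$, its semisimple part generates a separable subalgebra isomorphic to a field, and one extracts a single $u$" --- is not an argument: nothing in it explains why $A$ is monogenic or why the minimal polynomial is a pure prime power. The paper proves this by induction on the length $\ell$: the induced algebra $C$ on $V/W$ ($W$ the socle) is monogenic by induction, say $C=F[\widetilde{x}]$ with minimal polynomial $p^{\ell-1}$; one then shows $\deg p$ equals $\dim W$, so $x$ has elementary divisors on $V$ either $p^{\ell}$ alone (take $u=x$; every element of the commutative $A$ commutes with the cyclic operator $x$, hence is a polynomial in it) or $p^{\ell-1},p$, in which case an explicit computation with $p^{\ell-2}(x)v_2$ forces $\ell=2$, and one takes $u=x+y$ with $y\neq 0$ nilpotent, using the \emph{uniqueness} of the Jordan--Chevalley decomposition over a perfect field to rule out minimal polynomial $p$ and conclude it is $p^2$. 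None of this case analysis appears in your plan.

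The lifting step in your proof of the final statement is not merely incomplete but false as stated: generating the residue field $R(A)=A/\mathfrak{m}$ does not force $F[z]=A$. If $u=s+n$ is the Jordan--Chevalley decomposition of a generator with minimal polynomial $p^2$, $\deg p\geq 2$, then $z=s$ satisfies $\bar z=\bar u$, so $F[\bar z]=R(A)$, yet $F[z]\cong R(A)$ is a proper subalgebra; your claim that the minimal polynomial of $z$ reduces to a power of that of $\bar z$ and that "a dimension count forces $F[z]=A$" fails because that power can be strictly smaller than $\ell$. Concretely, with generators $x_1=s+n$, $x_2=s-n$ (char $\neq 2$), the choice $\alpha_1=\alpha_2=1$ passes every test your recipe imposes (all coefficients nonzero, $\bar z=2\bar u$ generates $R(A)$) but yields the semisimple $z=2s$ with $F[z]\subsetneq A$. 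The paper avoids this by a second induction on $\ell$: in the critical length-$2$ case it first locates a non-semisimple generator $y=x_i$ (if all $x_i$ acted semisimply, perfectness of $F$ would make every element of $A$ semisimple, contradicting $\mathrm{rad}(A)\neq 0$), writes $y=s+m$ with $m\neq 0$ nilpotent, and applies Theorem \ref{be} only on the socle $W$ to choose $\alpha$ with $v+\alpha y$ acting irreducibly there; the nonzero nilpotent component $\alpha m$, together with uniqueness of the Jordan--Chevalley decomposition, then pins the minimal polynomial of $z=v+\alpha y$ to $p_z^2$. Controlling the nilpotent part of $z$ is precisely what a one-shot reduction to $R(A)$ cannot do, and it is where perfectness of $F$ enters a second time.
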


\begin{proof} It is obvious that (a) implies (b) and that (b)
implies (c).

Suppose (c) holds. Then there is a finite dimensional $F$-vector
space $V$ such that $A$ is a subalgebra of $\End(V)$ and $V$ is a
uniserial $A$-module. We wish to show that there is $u\in A$ such
that $A=F[u]$, where the minimal polynomial of $u$ is an
$\ell$-power of an irreducible polynomial in $F[X]$.

We argue by induction on the composition length $\ell$ of $V$ as
an $A$-module. Suppose first that $\ell=1$, that is, $V$ is
irreducible. It follows from Schur's Lemma that
$D=\mathrm{End}_A(V)$ is a finite dimensional division algebra
over $F$. Since $A$ is commutative, every $v\mapsto av$, $a\in A$,
is in $D$. Thus the image of $A$ in $D$ is a finite field
extension of $F$, and hence so is $A$ since $V$ is faithful. Since
$F$ is perfect, there is $u\in A$ such that $A=F[u]$ (see
\cite{Ar}, Theorem 27). Clearly, the minimal polynomial of $u$ is
irreducible in $F[X]$.

Suppose next that $\ell>1$ and the result is true for faithful
uniserial modules of length $<\ell$.

Let $a\in A$. Since $A$ is commutative and $V$ is uniserial over
$A$, the minimal polynomial of $a$ must be a power of monic
irreducible polynomial $p_a\in F[X]$. Moreover, the minimal
polynomial of $a$ acting on any finite dimensional irreducible
$A$-module must be irreducible. We deduce that the minimal
polynomial of $a$ acting on any composition factor of $V$ is
$p_a$.

Let $W$ be the socle of $V$. Since $V$ is uniserial, $W$ is
irreducible. Let $B$ the subalgebra of $\End(W)$ consisting of all
$a|_W$, $a\in A$. By above, $B=F[b|_W]$ for some $b\in A$. Since
$W$ is irreducible as a module for $F[b]$, we see that
$\dim(W)=\deg(p_b)$.

Let $C$ be the subalgebra of $\End(V/W)$ of all $\widetilde{a}$,
where $a\in A$ and $\widetilde{a}(v+W)=a(v)+W$ for $v\in V$. Note
that $V/W$ is a faithful uniserial $C$-module of length $\ell-1$.
By inductive hypothesis, there is $x\in A$ such that
$C=F[\widetilde{x}]$, where the minimal polynomial of $x$ in $V/W$
is $p_x^{\ell-1}$.

We claim that $\deg(p_x)=\deg(p_b)$. Indeed, from $x|_W\in
F[b|_W]$ we deduce $\deg(p_x)\leq\deg(p_b)$. Let $U/W$ be the
socle of $V/W$, which is an irreducible $A$-module. Let $\hat{x}$
and $\hat{b}$ be the elements of $\End(U/W)$ corresponding to $x$
and $b$, respectively. Since $\hat{b}\in F[\hat{x}]$, we infer
$\deg(p_b)\leq\deg(p_x)$, as claimed.

From $\deg(p_x)=\deg(p_b)=\dim(W)$ we deduce that $W$ is an
irreducible $F[x]$-module. From now on we let $p=p_x$. Thus $x$
has a single elementary divisor $p^{\ell-1}$ on $V/W$ and a single
elementary divisor $p$ on $W$. Therefore, one of the following
cases must occur:

\noindent{\sc Case 1.} $x$ has a single elementary divisor
$p^\ell$. In this case we take $u=x$. Since every $a\in A$
commutes with the cyclic operator $u$, we infer $A=F[u]$.

\noindent{\sc Case 2.} $x$ has elementary divisors $p^{\ell-1}$
and $p$. Let $R=F[X]$ and view $V$ as an $R$-module via $x$. Then
$V=Rv_1\oplus Rv_2$, where the annihilating ideals of $v_1$
and~$v_2$ are respectively generated by $p$ and $p^{\ell-1}$.
Suppose, if possible, that $\ell>2$. Let $a\in A$. Then
$av_2=g(x)v_2+h(x)v_1$ for some $g,h\in R$. Therefore
$$
a p^{\ell-2}(x)v_2=
p^{\ell-2}(x)av_2=g(x)p^{\ell-2}(x)v_2+h(x)p^{\ell-2}(x)v_1=g(x)p^{\ell-2}(x)v_2.
$$
Thus $Rp^{\ell-2}(x)v_2$ is an irreducible $A$-submodule of $V$,
so $W=Rp^{\ell-2}(x)v_2$. But then the minimal polynomial of $x$
acting on $V/W$ is $p^{\ell-2}$, a contradiction. This proves that
$\ell=2$.

Thus $V$ is a completely reducible $R$-module, so $V=W\oplus U$,
for some irreducible $R$-submodule $U$ of $V$.

But $V$ is not a completely reducible $A$-module, so $A$ is not a
semisimple algebra. Therefore, $A$ has a non-zero nilpotent
element, say $y$. Hence $p_y(X)=X$, so $y$ acts trivially on $W$
and $V/W$. It follows that minimal polynomial of $u=x+y$ is $p$
or~$p^2$. Suppose, if possible, that the first case occurs. Then
$u$ is semisimple and has 2 different Jordan-Chevalley
decompositions, which is impossible since $F$ is perfect (see
\cite{B}, Chapter VII, \S 5). Thus $u$ has minimal polynomial
$p^2$, so $V$ is a uniserial $F[u]$-module, and a fortiori
$A=F[u]$. This proves (a).

Suppose now the equivalent conditions (a)-(c) hold. Since $A$ is
local, it is clear that its residue field $R(A)$ is its unique
irreducible module. Let $N$ be the number of distinct prime
factors of $[R(A):F]$ and suppose that $|F|>N-1$. Let
$x_1,\dots,x_n$ be any elements of $A$ satisfying
$A=F[x_1,\dots,x_n]$. We wish to show the existence of an
$F$-linear combination $z$ of $x_1,\dots,x_n$ such that $A=F[z]$.
We argue by induction on the composition length $\ell$ of $V=A$ as
an $A$-module.

If $\ell=1$ then $A=F[u]$ is a finite field extension of $F$. In
this case the existence of $z$ follows from Theorem \ref{be}.

Suppose next $\ell>1$ and the result is true for algebras of
satisfying (a)-(c) of length $<\ell$. Arguing as above, since $C$
is a non-zero factor of the local ring $A$, its residue field is
$F$-isomorphic to that of $A$. By inductive hypothesis, there is
$v$ in the $F$-span of $x_1,\dots,x_n$ such that
$C=F[\widetilde{v}]$. In Case 1, when $v$ is cyclic, we can take
$z=v$. In Case 2 we have $V=W\oplus U$, where $W$ is the socle of
the $A$-module $V$ and $W,U$ are irreducible $F[v]$-modules upon
which $v$ acts with irreducible minimal polynomial $p_v$. Suppose,
if possible, that every non-zero $x_1,\dots,x_n$ acts semisimply
on $V$. Since $F$ is perfect, it follows that every element of $A$
acts semisimply on $V$ (see \cite{B}, Chapter VII, \S 5, Corollary
to Proposition 16). This contradicts the fact that $A$ has
non-zero nilpotent radical. Thus, there is at least one $i$ such
that $y=x_i\neq 0$ is not semisimple. Since $F$ is perfect, $y$
has a Jordan-Chevalley decomposition in $\mathrm{End}(V)$ (see
\cite{B}, Chapter VII, \S 5), say $y=s+m$, where $s,m\in
F[y]\subseteq A$, $s$ is semisimple, $0\neq m$ is nilpotent, and
$[s,m]=0$. By Theorem \ref{be} applied to the finite field
extension $B=F[v|_W]$ of $F$, there is $0\neq \alpha\in F$ such
that $z=v+\alpha y$ satisfies $B=F[z|_W]$. Since $m$ acts
trivially on $W$, it follows that that $v+\alpha s$ acts
irreducibly on $W$. Now $v$ and $\alpha s$ are semisimple and
commute. Since $F$ is perfect, it follows, as above, that
$v+\alpha s$ is semisimple. By uniqueness of the Jordan-Chevalley
decomposition we see, as above, that $v+\alpha s+\alpha m=z$ has
minimal polynomial~$p_z^2$, where $\deg(p_z^2)=\deg(p^2)=\dim(V)$.
Thus $A=F[z]$, where $z=v+\alpha x_i$ is an $F$-linear combination
of $x_1,\dots,x_n$.
\end{proof}

\begin{cor}\label{t1} Let $F$ be a perfect field and let $A$ be a commutative and associative $F$-algebra.
Let $V$ be a finite dimensional uniserial $A$-module of length
$\ell$. Then there exists $x\in A$ such that $V$ is a uniserial
$F[x]$-module. In particular, $x$ acts on $V$ via the companion
matrix $C_f$ of a power $f=p^\ell$ of an irreducible polynomial
$p\in F[X]$, and every element of $A$ acts on $V$ via a polynomial
on $C_f$.
\end{cor}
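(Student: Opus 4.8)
The plan is to reduce the statement to the finite-dimensional faithful situation already settled in Theorem \ref{abel}, and then read off the cyclic structure from uniseriality over a principal ideal domain.

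First I would pass from $A$ to its image $\bar A$ in $\End_F(V)$. Since $\dim_F V<\infty$, the algebra $\bar A$ is a finite-dimensional commutative and associative $F$-algebra, and $V$ is a \emph{faithful} $\bar A$-module. Because the $A$-submodules and the $\bar A$-submodules of $V$ coincide, $V$ is a faithful uniserial $\bar A$-module of length $\ell$. Thus condition (c) of Theorem \ref{abel} holds for $\bar A$, and the implication (c)$\Rightarrow$(a) furnishes an element $u\in\bar A$ with $\bar A=F[u]$, the minimal polynomial of $u$ over $F$ being $f=p^\ell$ for some monic irreducible $p\in F[X]$.

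Next I would lift and analyze the module structure. Choose $x\in A$ mapping to $u$; then the action of $F[x]$ on $V$ equals that of $\bar A=F[u]$, so $V$ is uniserial as an $F[x]$-module. Regarding $V$ as a module over the PID $F[X]$ via $x$, the structure theorem writes $V$ as a finite direct sum of cyclic modules $F[X]/(p_i^{e_i})$; uniseriality forces a single summand with $p_i=p$, and since the composition length of $V$ equals $\ell$ we obtain $V\cong F[X]/(p^\ell)$. Equivalently, $\bar A\cong F[X]/(f)$ is a local principal Artinian ring, and a faithful finitely generated uniserial module over it is free of rank one. In either formulation $V$ is cyclic over $F[x]$, so relative to the basis given by the images of $1,x,\dots,x^{\deg f-1}$ the operator $x$ is the companion matrix $C_f$.

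Finally, every $a\in A$ acts on $V$ through its image in $\bar A=F[u]=F[x]$, hence via a polynomial in $C_f$, which completes the proof. I expect the only delicate point to be the bookkeeping in the reduction step, namely verifying that faithfulness and uniseriality transfer correctly from $A$ to $\bar A$ and that the composition length is preserved; once Theorem \ref{abel} applies, the remaining identification of $V$ with $F[X]/(p^\ell)$ is a routine consequence of the classification of finitely generated modules over a PID.
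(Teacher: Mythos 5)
Your proposal is correct and takes essentially the same route as the paper: Corollary \ref{t1} is derived there directly from Theorem \ref{abel} applied to the (finite dimensional, faithful) image of $A$ in $\End(V)$, exactly as in the paper's one-line proof of the Lie-algebra analogue, Corollary \ref{t22}. Your additional verification via the structure theorem over the PID $F[X]$ that uniseriality and faithfulness force $V\cong F[X]/(p^\ell)$, so that $x$ acts by the companion matrix $C_f$ and every element of $A$ by a polynomial in it, merely makes explicit the routine details the paper leaves implicit.
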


\begin{cor}\label{t22} Let $F$ be a perfect field and let $\g$ be an abelian Lie algebra over~$F$.
Let $V$ be a finite dimensional uniserial $L$-module of length
$\ell$. Let $W$ be the socle of $V$.  Let $N$ be the number of
distinct prime factors of $\dim(W)$ and suppose that $|F|>N-1$.
Then there exists $x\in \g$ such that $V$ is a uniserial
$F[x]$-module. In particular, $x$ acts on $V$ via the companion
matrix $C_f$ of a power $f=p^\ell$ of an irreducible polynomial
$p\in F[X]$, and other every element of $\g$ acts on $V$ via a
polynomial on $C_f$.
\end{cor}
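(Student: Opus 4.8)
The plan is to reduce the statement to its associative counterpart, Theorem~\ref{abel}, by passing from $\g$ to the commutative associative algebra it generates inside $\End(V)$. Let $\rho\colon\g\to\End(V)$ be the given representation. Since $\g$ is abelian, $\rho([x,y])=[\rho(x),\rho(y)]=0$, so the operators $\rho(\g)$ pairwise commute. Let $A$ be the unital associative $F$-subalgebra of $\End(V)$ generated by $\rho(\g)$. Then $A$ is commutative, and it is finite dimensional because it sits inside the finite dimensional algebra $\End(V)$. A subspace $U\subseteq V$ is a $\g$-submodule precisely when it is stable under every $\rho(x)$, $x\in\g$, which happens precisely when it is an $A$-submodule; hence the lattices of $\g$-submodules and of $A$-submodules of $V$ coincide. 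Consequently $V$ is a uniserial $A$-module of length $\ell$, and it is faithful over $A$ since $A\subseteq\End(V)$ by construction.

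First I would invoke Theorem~\ref{abel}. Condition (c) holds for $A$ and $V$, so the equivalent conditions (a)--(c) all hold, and $A$ has a unique irreducible module, namely its residue field $R(A)$. Since $V$ is uniserial, its socle $W$ is irreducible over $A$, hence $W\cong R(A)$ and therefore $\dim_F(W)=[R(A):F]$. Thus the integer $N$ in the statement, the number of distinct prime factors of $\dim_F(W)$, equals the number of distinct prime factors of $[R(A):F]$, so the hypothesis $|F|>N-1$ is exactly the one required by the ``moreover'' part of Theorem~\ref{abel}.

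Next I would produce the desired $x$ from an honest element of $\g$. Choose finitely many $x_1,\dots,x_n\in\g$ whose images $\rho(x_1),\dots,\rho(x_n)$ span the subspace $\rho(\g)$ of $\End(V)$; then $A=F[\rho(x_1),\dots,\rho(x_n)]$. By the ``moreover'' part of Theorem~\ref{abel}, using $|F|>N-1$, there is an $F$-linear combination $z=\alpha_1\rho(x_1)+\cdots+\alpha_n\rho(x_n)$ with $A=F[z]$. Setting $x=\alpha_1 x_1+\cdots+\alpha_n x_n\in\g$ and using the $F$-linearity of $\rho$, we obtain $\rho(x)=z$ and $A=F[\rho(x)]$. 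Since $V$ is uniserial over $A=F[\rho(x)]$, it is a uniserial $F[x]$-module; as in Corollary~\ref{t1}, viewing $V$ as an $F[X]$-module through $\rho(x)$ forces a single elementary divisor $f=p^\ell$, so $\rho(x)$ acts via the companion matrix $C_f$, while every $y\in\g$ acts via $\rho(y)\in A=F[C_f]$, i.e.\ via a polynomial in $C_f$.

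The main obstacle is not any single hard computation but the careful bookkeeping of invariants across the dictionary between the Lie and associative pictures: one must check that ``uniserial of length $\ell$'' transfers verbatim (which follows from the equality of submodule lattices) and, crucially, that the socle dimension $\dim_F(W)$ equals the residue-field degree $[R(A):F]$, so that the field-size condition $|F|>N-1$ matches precisely the hypothesis of Theorem~\ref{abel}. The only other delicate point is the observation that the primitive element $z$ furnished by the associative theorem, being a linear combination of the generators $\rho(x_i)$, is automatically the image under $\rho$ of an element of $\g$; this is what lets us land inside $\g$ rather than merely inside $A$.
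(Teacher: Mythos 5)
Your proposal is correct and is essentially the paper's own proof: the paper disposes of this corollary in one line, ``Apply Theorem \ref{abel} to the subalgebra of $\End(V)$ generated by the image of $\g$ under the given representation.'' You have merely made explicit the details the paper leaves implicit --- the coincidence of the $\g$- and $A$-submodule lattices, the identification $\dim_F(W)=[R(A):F]$ matching the hypothesis $|F|>N-1$, and the pullback of the primitive element $z=\sum\alpha_i\rho(x_i)$ to $x=\sum\alpha_i x_i\in\g$ --- all of which are accurate.
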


\begin{proof} Apply Theorem \ref{abel} to the subalgebra of $\End(V)$ generated by the image of~$\g$ under
the given representation.
\end{proof}

\begin{cor}\label{dernier} Let $F$ be an algebraically closed
field and let $\mathcal A$ be an abelian associative or Lie
algebra over $F$. Let $V$ be a uniserial $\mathcal A$-module of
finite dimension $m$. Then there exists an $x$ in $\mathcal A$
that is represented by a Jordan block $J_m(\alpha)$, $\alpha\in
F$, relative to a basis of $V$. Moreover, every other element of
$\mathcal A$ is represented by a polynomial in $J_m(\alpha)$ in
that basis.
\end{cor}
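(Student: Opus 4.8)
The plan is to obtain this as an immediate specialization of the two preceding corollaries, using the fact that over an algebraically closed field the monic irreducible polynomials are exactly the linear ones. First I would check the hypotheses needed to invoke those corollaries: an algebraically closed field is perfect, so Corollary \ref{t1} (in the associative case) and Corollary \ref{t22} (in the Lie case) both apply to $V$. Moreover $F$ is infinite, so in the Lie case the size condition $|F|>N-1$ of Corollary \ref{t22} holds trivially, whatever the value of $N$.

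Applying the appropriate corollary, I obtain an element $x\in\mathcal A$ such that $V$ is a uniserial $F[x]$-module, with $x$ acting via the companion matrix $C_f$ of a power $f=p^\ell$ of some monic irreducible $p\in F[X]$, and with every element of $\mathcal A$ acting as a polynomial in $C_f$. Since $F$ is algebraically closed, $p=X-\alpha$ for some $\alpha\in F$; hence $\deg p=1$, the socle $W$ of $V$ has dimension $\deg p=1$, and the composition length coincides with $\dim_F V$, so $\ell=m$ and $f=(X-\alpha)^m$.

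It then remains only to pass from the companion matrix to a Jordan block. The companion matrix $C_f$ of $(X-\alpha)^m$ and the Jordan block $J_m(\alpha)$ both have minimal and characteristic polynomial equal to $(X-\alpha)^m$, so each has a single invariant factor; by the theory of rational canonical forms they are therefore similar over $F$. Choosing a basis of $V$ that realizes this similarity, $x$ is represented by $J_m(\alpha)$, and every other element of $\mathcal A$, being a polynomial in $C_f$, becomes the corresponding polynomial in $J_m(\alpha)$ in that basis.

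I expect essentially no serious obstacle, since the entire content has been pushed into the earlier corollaries. The only points requiring care are verifying that the field hypotheses---perfectness, and for the Lie case the cardinality bound---feed correctly into the cited results, and recording the elementary fact that the companion matrix of $(X-\alpha)^m$ is similar to $J_m(\alpha)$.
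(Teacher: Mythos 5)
Your proposal is correct and follows exactly the route the paper intends: Corollary \ref{dernier} is stated as an immediate consequence of Corollaries \ref{t1} and \ref{t22}, whose hypotheses you rightly verify (an algebraically closed field is perfect, and being infinite it satisfies $|F|>N-1$ vacuously), after which irreducibility forces $p=X-\alpha$, $\ell=m$, and the companion matrix of $(X-\alpha)^m$ is similar to $J_m(\alpha)$ since both have the single invariant factor $(X-\alpha)^m$. No gaps; this matches the paper's (implicit) argument.
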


\begin{note}\label{menti}{\rm It is well-known (see \cite{Ar}, Theorem 27) that if $F$ is a
perfect field then every finite extension $K$ of $F$ has a
primitive element, i.e., an element $x\in K$ such that $K=F[x]$.

However, certain imperfect fields share this property and it is
conceivable that the implication $(c)\rightarrow (a)$ of Theorem
\ref{abel} remains valid for these fields, which is certainly the
case when the given uniserial module is irreducible. The purpose
of this note is to show that the implication $(c)\rightarrow (a)$
of Theorem \ref{abel} actually fails for every imperfect field,
once we allow uniserial non-irreducible modules. In short, the
condition that $F$ be perfect is essential to Theorem \ref{abel}.

For the remainder of this note $F$ stands for a field of prime
characteristic $p$. Then $F^p$ is a subfield of $F$ and the degree
$[F:F^p]$ is either infinite or a power of $p$. Following
Teichm$\rm{\ddot{u}}$ller \cite{T}, we define the degree of
imperfection of $F$ to be infinite or $m\geq 0$, depending on
whether $[F:F^p]$ is infinite or equal to $p^m$. Thus $F$ is
perfect if and only if it has degree of imperfection 0. In this
case, if $K=F(X_1,\dots,X_m)$, where $X_1,\dots,X_m$ are
algebraically independent over $F$, then $K$ has degree of
imperfection $m$.

It was shown by Steinitz \cite{St} that every finite extension of
an imperfect field of degree of imperfection 1 has a primitive
element. For a generalization of Steinitz result see \cite{BM}. It
is easy to see that every imperfect field whose degree of
imperfection is $>1$ has a finite extension with no primitive
element. In particular, Theorem \ref{abel} does not apply to any
of these fields.

Suppose $F$ is imperfect. We claim that there is a commutative and
associative $F$-algebra $A$ of dimension $2p$ with a faithful
uniserial $A$-module $V$ of dimension $2p$ and no $x\in A$ such
$A=F[x]$. The key underlying factor here is that the uniqueness
part of the Jordan-Chevalley decomposition fails over an imperfect
field.

To prove the claim observe that, by hypothesis, there exists $a\in
F$ such that $a\notin F^p$. Therefore (see \cite{M}, \S 1.4,
Theorem 9) the polynomial $f=X^p-a\in F[X]$ is irreducible. Let
$C$ be the companion matrix to $f$ and consider the matrices in
$D,E\in M_{2p}(F)$ defined in terms of $p\times p$ blocks as
follows:
$$
D=\left(%
\begin{array}{cc}
  C & 0 \\
  0 & C \\
\end{array}%
\right),\,
E=\left(%
\begin{array}{cc}
  0 & I_p \\
  0 & 0 \\
\end{array}%
\right).
$$
It is clear that the subalgebra $A$ of $M_{2p}(F)$ generated by
$D$ and $E$ is commutative of dimension $2p$ with $F$-basis
$D^iE^j$, where $0\leq i<p$, $0\leq j<1$. Moreover, the column
space $V=F^{2p}$ is a faithful uniserial $A$-module of dimension
$2p$. Let $x\in A$. Then there are $\alpha_{ij}\in F$ such that
$x$ is the sum of all $\alpha_{ij}D^iE^j$, $0\leq i<p$, $0\leq
j<1$. Since $E^2=0$ and $D^p=a I_{2p}$, it follows that $x^p$ is
the sum of all $\alpha_{i0}^p a^i I_{2p}$, $0\leq i<p$. Thus,
$x^p=s I_{2p}$, where $s\in F$, so $x$ is a root of $X^p-s\in
F[X]$. In particular, $F[x]$ has dimension $\leq p$, so $F[x]$ is
a proper subalgebra of $A$. Note, finally, that $B=D+E$ satisfies
$$B^p=(D+E)^p=D^p+E^p=a I_{2p},
$$
so the minimal polynomial of $B$ is $f$. In particular, $B$ is
semisimple and has 2 different Jordan-Chevalley decompositions.
}
\end{note}

\begin{note} {\rm Let $A$ be a commutative associative algebra
over a field $F$ and let $V$ be a finite dimensional faithful
uniserial $A$-module. It is not difficult to see that $V=Av$ must
be cyclic, where the annihilator of $v$ is trivial. Then $A\cong
Av=V$, so $A$ is uniserial as $A$-module. The argument is taken
from the proof of \cite{SL}, Proposition~2.1. However, the fact
that $A$ itself is uniserial in no way implies the results of this
section, as shown in Note \ref{menti}.}
\end{note}

\begin{note} {\rm The condition $|F|>N-1$ is essential in Theorem
\ref{abel} and Corollary~\ref{t22}, as the irreducible module
$F[x,y]$ of Theorem \ref{unomas} shows.}
\end{note}

\begin{note}{\rm The condition that $V$ be finite dimensional in Theorem \ref{abel} is essential. Let $F$ be a field and let $K=F(X)$.
The regular module of $K$ is irreducible, but there is no $x\in K$
such that $K=F[x]$.}
\end{note}

\begin{note}{\rm The use of uniserial modules in Theorem
\ref{abel} and Corollary~\ref{t22} cannot be extended to more
general indecomposable modules. Indeed, let $F$ be a field and let
$V$ be a finite dimensional indecomposable but not uniserial
module for a Lie or associative algebra. Then there is no $x$ in
this algebra such that $V$ is an indecomposable $F[x]$-module, for
in that case $V$ will be a uniserial $F[x]$-module, and hence a
uniserial module for the given algebra.}
\end{note}

\begin{note}{\rm The theorem of the primitive element
for finite field extensions of perfect fields is a special case of
Theorem \ref{abel}.}
\end{note}

\noindent {\bf Acknowledgment.} We thank A. Herman for helpful
discussions about Theorem \ref{unomas}.



\begin{thebibliography}{FH}

\bibitem[AF]{AF} F. Anderson and K. Fuller,
\emph{Rings and categories of modules, Second edition}, Graduate
Texts in Mathematics, 13, Springer-Verlag, New York, 1992.

\bibitem[Ar]{Ar} E. Artin, \emph{Galois Theory}, Dover, New York,
1998.


\bibitem[ARS]{ARS} M. Auslander, I. Reiten and S. Smal\o,
\emph{Representation theory of Artin algebras}, Cambridge Studies
in Advanced Mathematics, 36, Cambridge University Press,
Cambridge, 1995.

\bibitem[B]{B} N. Bourbaki, \emph{Algebra II}, Springer-Verlag,
Berlin, 1988.

\bibitem[BDS]{BDS} J. Browkin, B. Divi$\rm{\breve{s}}$ and A.
Schinzel, \emph{Addition of sequences in general fields}, Monatsh.
Math. \textbf{82} (1976) 261-–268.


\bibitem[BM]{BM} M. F. Becker and S. MacLane, \emph{The minimum number of generators for inseparable algebraic extensions},
Bull. Amer. Math. Soc. \textbf{46} (1940) 182–-186.


\bibitem[CS1]{CS1} L. Cagliero and F. Szechtman,
\emph{The classification of uniserial $\sl(2)\ltimes V(m)$-modules
and a new interpretation of the Racah-Wigner $6j$-symbol},  J.
Algebra  \textbf{386} (2013) 142-–175.

\bibitem[CS2]{CS2} L. Cagliero and F. Szechtman, {\em Uniserial
modules of certain solvable Lie algebras}, in preparation.

\bibitem[DF]{DF} D. S.  Dummit and R. M. Foote, \emph{Abstract algebra}, third
edition, John Wiley, NJ, 2004.

\bibitem[DK]{DK} Y. Drozd and  V. Kirichenko,
\emph{Finite-dimensional algebras}, Springer-Verlag, Berlin, 1994.

\bibitem[EG]{EG} D. Eisenbud and P. Griffith,
\emph{Serial rings}, J. Algebra \textbf{17} (1971) 389--400.

\bibitem[Fa]{Fa} A. Facchini,
Module theory. Endomorphism rings and direct sum decompositions in
some classes of modules, Progress in Mathematics \textbf{167},
Birkh$\rm{\ddot{a}}$user, Basel 1998.

\bibitem[GP]{GP} I.M. Gelfand, V.A. Ponomarev, \emph{Remarks on the classification of a
pair of commuting linear transformations in a finite dimensional
vector space}, Functional Anal. Appl. \textbf{3} (1969) 325-326.

\bibitem[GS]{GS} N. Guerszenzvaig and F. Szechtman, {\em Generalized Artin-Schreier
polynomials}, submitted.



\bibitem[I]{I} I. M. Isaacs, \emph{Degrees of sums in a separable field
extension}, Proc. Amer. Math. Soc. \textbf{25} (1970) 638-–641.

\bibitem[K]{K} I. Kaplansky, \emph{Fields and Rings}, Univ. of
Chicago Press, Chicago, 1969.


\bibitem[M]{M} P. J. McCarthy, \emph{Algebraic extensions of
fields}, Dover, New York, 1991.

\bibitem[Ma]{Ma} I. Makedonskyi, \emph{On wild Lie algebras},
arXiv:1202.1401v2.

\bibitem[N1]{N1} T. Nagell, \emph{Bemerkungen $\rm{\ddot{u}}$ber zusammengesetzte Zahlk$\rm{\ddot{o}}$rper}, Avh. Norske
Vid. Akad. Oslo (1937) 1--26.

\bibitem[N2]{N2} T. Nagell, \emph{Bestimmung des Grades gewisser relativalgebraischer
Zahlen}, Monatsh. Math. Phys. \textbf{48} (1939) 61–-74.


\bibitem[Na]{Na} T. Nakayama,
\emph{On Frobeniusean algebras II}, Ann. of Math. \textbf{42}
(1941) 1--21.


\bibitem[P]{P} B. V. Petrenko, \emph{On the sum of two primitive
elements of maximal subfields of a finite field}, Finite Fields
Appl. \textbf{9} (2003) 102-–116.

\bibitem[Pu]{Pu} G. Puninski,
Serial rings.  Kluwer Academic Publishers, Dordrecht, 2001.

\bibitem[R]{R} D. J. S. Robinson, \emph{A course in the theory of groups}, Graduate Texts in Mathematics, Springer-Verlag, New York, 1996.


\bibitem[SL]{SL} T. Shores and W. Lewis,
\emph{Serial modules and endomorphism rings},
 Duke Math. J. \textbf{41} (1974) 889--909.

\bibitem[Sr]{Sr} B. Srinisivan,
\emph{On the indecomposable representations of a certain class of
groups}, Proc. London Math. Soc. \textbf{10} (1960), 497--513.

\bibitem[St]{St} E. Steinitz, \emph{Algebraische Theorie der
K$\rm{\ddot{o}}$rper}, J. Reine Angew. Math. \textbf{137} (1910)
167–-309.

\bibitem[T]{T} O. Teichm$\rm{\ddot{u}}$ller, \emph{$p$-Algebren},
Deutsche Mathematik, \textbf{1} (1936) 362-388.

\bibitem[W]{W} S. Weintraub,
\emph{Observations on primitive, normal, and subnormal elements of
field extensions}, Monatsh. Math. \textbf{162} (2011) 239--244.




\end{thebibliography}
\end{document}